\title[From Stinespring dilation to Sz.-Nagy dilation on $\Gamma$]
{From Stinespring dilation to Sz.-Nagy dilation on the symmetrized
bidisc and operator models}
\author{Sourav Pal}
\address{Department of Mathematics, Ben-Gurion University of the Negev, Be'er Sheva-84105, Israel.}
\email{sourav@math.bgu.ac.il}
\thanks{The author was supported in part by a postdoctoral fellowship of the Skirball
Foundation via the Center for Advanced Studies in Mathematics at
Ben-Gurion University of the Negev.}
\keywords{Symmetrized bidisc, Spectral sets, Normal distinguished
boundary dilation, Operator models}
\subjclass[2010]{47A13, 47A15, 47A20, 47A25, 47A45}
\def\textmatrix#1&#2\\#3&#4\\{\bigl({#1 \atop #3}\ {#2 \atop #4}\bigr)}
\def\dispmatrix#1&#2\\#3&#4\\{\left({#1 \atop #3}\ {#2 \atop #4}\right)}
\newcommand{\beg}{\begin{equation}}
\newcommand{\eeg}{\end{equation}}
\newcommand{\ben}{\begin{eqnarray*}}
\newcommand{\een}{\end{eqnarray*}}
\newtheorem{thm}{Theorem}[section]
\newtheorem{cor}[thm]{Corollary}
\newtheorem{lem}[thm]{Lemma}
\newtheorem{prop}[thm]{Proposition}
\numberwithin{equation}{section}
\theoremstyle{definition}
\newtheorem{defn}[thm]{Definition}
\newtheorem{rem}[thm]{Remark}
\def\textmatrix#1&#2\\#3&#4\\{\bigl({#1 \atop #3}\ {#2 \atop #4}\bigr)}
\def\dispmatrix#1&#2\\#3&#4\\{\left({#1 \atop #3}\ {#2 \atop #4}\right)}
\begin{document}

\begin{abstract}

We provide an explicit normal distinguished boundary dilation to a
pair of commuting operators $(S,P)$ having the closed symmetrized
bidisc $\Gamma$ as a spectral set. This is called Sz.-Nagy
dilation of $(S,P)$. The operator pair that dilates $(S,P)$ is
obtained by an application of Stinespring dilation of $(S,P)$
given by Agler and Young. We further prove that the dilation is
minimal and the dilation space is no bigger than the dilation
space of the minimal unitary dilation of the contraction $P$. We
also describe model space and model operators for such a pair
$(S,P)$.

\end{abstract}

\maketitle

\tableofcontents

\section{Introduction}

The {\em closed symmetrized bidisc} and its {\em distinguished
boundary}, denoted by $\Gamma$ and $b\Gamma$ respectively, are
defined by
\begin{align*}
\Gamma &= \{(z_1+z_2,z_1z_2):\; |z_1|\leq 1, |z_2|\leq 1
\}\subseteq \mathbb C^2 \qquad \text{and}\\ b\Gamma &=\{
(z_1+z_2,z_1z_2):\; |z_1|=|z_2|=1 \}\subseteq \Gamma.
\end{align*}
Clearly, the points of $\Gamma$ and $b\Gamma$ are the
symmetrization of the points of the closed bidisc
$\overline{\mathbb D}^2$ and the torus $\mathbb T^2$ respectively,
where the symmetrization map is the following:
\[
\pi: \mathbb
C^2 \rightarrow \mathbb C^2, \quad (z_1,z_2)\mapsto
(z_1+z_2,z_1z_2).
\]

Function theory, hyperbolic geometry and operator theory related
to the set $\Gamma$ have been well studied over past three decades
(e.g. \cite{ay-jfa, ay-ems, ay-blm, ay-jot, ay-jga, ay-caot,
tirtha-sourav, tirtha-sourav1, pal-shalit}).

\begin{defn}
A pair of commuting operators $(S,P)$, defined on a Hilbert space
$\mathcal H$, that has $\Gamma$ as a spectral set is called a
$\Gamma$-\textit{contraction}, i.e. the joint spectrum $\sigma
(S,P)\subseteq \Gamma$ and
\[
 \|f(S,P)\|\leq
\sup_{(z_1,z_2)\in\Gamma}|f(z_1,z_2)|,
\]
for all rational
functions $f$ with poles off $\Gamma$.
\end{defn}
By virtue of polynomial convexity of $\Gamma$, the definition can
be made more precise by omitting the condition on joint spectrum
and by replacing rational functions by polynomials. It is clear
from the definition that if $(S,P)$ is a
$\Gamma$-contraction then so is $(S^*,P^*)$ and $\|S\|\leq 2$,\, $\|P\|\leq 1$. \\

A commuting $d$-tuple of operators
$\underline{T}=(T_1,T_2,\cdots,T_d)$ for which a particular subset
of $\mathbb C^d$ is a spectral set, has been studied for a long
time and many important results have been obtained (see
\cite{paulsen}). Let $W\subseteq \mathbb C^d$ be a spectral set
for $\underline{T}=(T_1,T_2,\cdots,T_d)$. A {\em normal
$bW$-dilation} of $\underline{T}$ is a commuting $d$-tuple of
normal operators $\underline{N}=(N_1,\cdots,N_d)$ defined on a
larger Hilbert space $\mathcal K \supseteq \mathcal H$ such that
the joint spectrum $\sigma(\underline{N})\subseteq bW$ and
$q(\underline{T})=P_{\mathcal H}q(\underline{N})\big|_{\mathcal
H}$, for any polynomial $q$ in $d$-variables $z_1,\dots,z_d$. A
celebrated theorem of Arveson states that $W$ is a complete
spectral set for $\underline T$ if and only if $\underline T$ has
a normal $bW$-dilation, (Theorem 1.2.2 and its corollary,
\cite{arveson2}). Therefore, a necessary condition for $\underline
T$ to have a normal $bW$-dilation is that $W$ be a spectral set
for $\underline T$. Sufficiency has been investigated for several
domains in several contexts, and it has been shown to have a
positive answer when $W=\overline{\mathbb D}$ \cite{nazy}, when
$W$ is an annulus \cite{agler-ann}, when $W=\overline{\mathbb
D^2}$ \cite{ando} and when $W=\Gamma$ \cite{ay-jfa}. Also we have
failure of rational dilation on a triply connected domain in
$\mathbb C$
\cite{ahr, DM}.\\

The main aim of this paper is to construct an explicit normal
$b\Gamma$-dilation to a $\Gamma$-contraction $(S,P)$, which we
call Sz.-Nagy dilation of $(S,P)$. As a consequence we obtain a
concrete functional model for $(S,P)$. The principal source of
inspiration is the following dilation theorem which will be called
Stinespring dilation of $(S,P)$.
\begin{thm}[Agler and Young, \cite{ay-jfa}] \label{stinespring}
Let $(S,P)$ be a pair of commuting operators on a Hilbert space
$\mathcal H$ such that the joint spectrum $\sigma(S,P)\subseteq
\Gamma$. The following are equivalent.
\begin{enumerate}
\item $(S,P)$ is a $\Gamma$-contraction; \item $\rho(\alpha
S,\,{\alpha}^2P)\geq 0,$ for all $\alpha\in\mathbb D$, where
\[
\rho(S,P)=2(I-P^*P)-(S-S^*P)-(S^*-P^*S);
\]
\item for every matrix
polynomial $f$ in two variables
\[
\|f(S,P)\|\leq \sup_{z\in\Gamma}\|f(z)\|;
\]
\item there exist
Hilbert spaces $\mathcal H_-,\,\mathcal H_+$ and commuting normal
operators $\tilde S,\tilde P$ on $\mathcal K=\mathcal H_-\oplus
\mathcal H \oplus \mathcal H_+$ such that the joint spectrum
$\sigma(\tilde S, \tilde P)$ is contained in the distinguished
boundary of $\Gamma$ and $\tilde S, \tilde P$ are expressible by
operators matrices of the form
\[
\tilde S=\begin{bmatrix} *&*&*\\0&S&*\\0&0&* \end{bmatrix},
\quad \tilde P=\begin{bmatrix}*&*&*\\0&P&*\\0&0&* \end{bmatrix}
\]
with respect to the orthogonal decomposition $\mathcal K=\mathcal
H_-\oplus \mathcal H\oplus \mathcal H_+$.
\end{enumerate}

\end{thm}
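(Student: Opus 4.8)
The plan is to prove the cycle $(2)\Rightarrow(4)\Rightarrow(3)\Rightarrow(1)\Rightarrow(2)$, with $(2)\Rightarrow(4)$ carrying essentially all the weight.

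The implication $(3)\Rightarrow(1)$ is immediate: specialise to scalar $f$ and use polynomial convexity of $\Gamma$ to pass between polynomials and rational functions with poles off $\Gamma$. For $(4)\Rightarrow(3)$: if $(\tilde S,\tilde P)$ is a commuting normal pair on $\mathcal K=\mathcal H_-\oplus\mathcal H\oplus\mathcal H_+$ with $\sigma(\tilde S,\tilde P)\subseteq b\Gamma$ and of the stated block-upper-triangular form with middle blocks $S,P$, then the middle block of a product (or sum) of $3\times 3$ block-upper-triangular operators is the product (or sum) of the middle blocks, so $q(S,P)=P_\mathcal H\,q(\tilde S,\tilde P)\big|_\mathcal H$ for every polynomial $q$; hence for a matrix polynomial $f$ one gets $\|f(S,P)\|\le\|f(\tilde S,\tilde P)\|=\sup_{\sigma(\tilde S,\tilde P)}\|f\|\le\sup_{\Gamma}\|f\|$, the middle equality being the spectral theorem for the commuting normal pair.

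For $(1)\Rightarrow(2)$: since $(z_1+z_2,z_1z_2)\mapsto(\alpha z_1+\alpha z_2,(\alpha z_1)(\alpha z_2))$ shows that $(s,p)\mapsto(\alpha s,\alpha^2p)$ maps $\Gamma$ into itself when $|\alpha|\le1$, the pair $(\alpha S,\alpha^2P)$ is again a $\Gamma$-contraction for each $\alpha\in\mathbb D$. For $\omega\in\mathbb D$ set $\Phi_\omega(s,p)=(2\omega p-s)(2-\omega s)^{-1}$, whose only pole $s=2/\omega$ lies off $\Gamma$; the relevant geometric fact, $\sup_{\Gamma}|\Phi_\omega|\le1$, reduces (on the distinguished boundary, and thence over $\Gamma$ by the maximum principle) to the elementary inequality $(1-|z_1|)(1-|z_2|)(1-|z_1 z_2|)\ge0$. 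Applying the $\Gamma$-contraction inequality to $\Phi_\omega$ and $(\alpha S,\alpha^2P)$ gives $\|\Phi_\omega(\alpha S,\alpha^2P)\|\le1$; letting $\omega\to1$ (legitimate because $\sigma(\alpha S,\alpha^2P)$ avoids $\{s=2\}$ for $|\alpha|<1$, so the holomorphic functional calculus is continuous there) yields $\|(2\alpha^2P-\alpha S)(2-\alpha S)^{-1}\|\le1$, and multiplying out by the invertible operator $2-\alpha S$ turns this into $\rho(\alpha S,\alpha^2P)\ge0$.

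The heart of the matter is $(2)\Rightarrow(4)$, which I would handle by an explicit construction: a generic appeal to Arveson's theorem gives a normal $b\Gamma$-dilation, but the two-sided triangular form is most transparently obtained directly. Let $\tilde P$ be the minimal unitary dilation of the contraction $P$ on $\mathcal K=\mathcal H_-\oplus\mathcal H\oplus\mathcal H_+$; relative to this Sz.-Nagy decomposition $\tilde P$ is block-upper-triangular with middle block $P$. On the same space I would build $\tilde S$, block-upper-triangular with middle block $S$, from the \emph{fundamental operator} $F$ of $(S,P)$. Writing $D_P=(I-P^*P)^{1/2}$: evaluated at unimodular $\alpha$ (to which $(2)$ extends by continuity), the conditions $(2)$ read $\mathrm{Re}\big(\alpha(S-S^*P)\big)\le D_P^2$ for all such $\alpha$, which by the usual numerical-radius lemma produces a unique $F\in\mathcal B\big(\overline{\mathrm{Ran}}\,D_P\big)$ with $S-S^*P=D_PFD_P$ and numerical radius $\le1$. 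The operator $\tilde S$ is then to be defined so that, simultaneously, $\tilde S\tilde P=\tilde P\tilde S$, the boundary relation $\tilde S=\tilde S^*\tilde P$ holds, and $\|\tilde S\|\le2$. The last two relations force $\tilde S$ to be normal (indeed $\tilde S\tilde S^*=\tilde S^*\tilde P\tilde P^*\tilde S=\tilde S^*\tilde S$), and a direct computation gives $b\Gamma=\{(s,p):|p|=1,\ s=\bar sp,\ |s|\le2\}$, whence a commuting pair with $\tilde P$ unitary, $\tilde S=\tilde S^*\tilde P$ and $\|\tilde S\|\le2$ automatically has $\sigma(\tilde S,\tilde P)\subseteq b\Gamma$; together with the block-triangular form this gives $(4)$. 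The main obstacle is exactly this construction — pinning down the correct formula for $\tilde S$ in terms of $F$ and the Sz.-Nagy--Foias model of $\tilde P$, and checking commutation, the relation $\tilde S=\tilde S^*\tilde P$, and the norm bound all at once; a lesser technical point is the geometric estimate $\sup_\Gamma|\Phi_\omega|\le1$ used in $(1)\Rightarrow(2)$.
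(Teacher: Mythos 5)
You should first note that the paper does not prove this theorem at all: it is quoted from Agler and Young, with the remark that part (4) is obtained from (3) by Arveson's theorem together with Stinespring's dilation theorem (hence the name), and the reader is referred to Theorem 1.2 of \cite{ay-jfa}. Against that background, your outline of $(4)\Rightarrow(3)\Rightarrow(1)$ is fine, and your $(1)\Rightarrow(2)$ via the functions $\Phi_\omega$ is the standard Agler--Young argument; the geometric estimate does reduce, after substituting $w_i=\omega z_i$, to $|2-(w_1+w_2)|^2-|2w_1w_2-(w_1+w_2)|^2\ge 4(1-|w_1|)(1-|w_2|)(1-|w_1w_2|)\ge 0$, so that lesser point is sound. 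Your proposed $(2)\Rightarrow(4)$ is genuinely different from the cited route: you want to replace the abstract Arveson--Stinespring step by the explicit construction that is, in fact, the content of Theorem \ref{main-dilation-theorem} of this very paper.

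The genuine gap is that this central implication is not actually carried out --- you say so yourself (``the main obstacle is exactly this construction''), and everything else in the cycle is routine. Concretely, three things are missing. First, existence of $F$: the diagonal bound $\mathrm{Re}\,\beta\langle(S-S^*P)h,h\rangle\le\|D_Ph\|^2$ controls only $\langle(S-S^*P)h,h\rangle$; you need a polarization step giving $|\langle(S-S^*P)h,h'\rangle|\le 2\|D_Ph\|\,\|D_Ph'\|$ before a bounded $F$ on $\mathcal D_P$ with $S-S^*P=D_PFD_P$ can be defined (this is the route of \cite{tirtha-sourav}); the numerical-radius lemma then gives $\omega(F)\le1$, but only after $F$ exists. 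Second, the construction needs the fundamental operator $F_*$ of $(S^*,P^*)$ as well: in the matrix \eqref{2.3} the entries $D_{P^*}F_*$, $F_*^*$, $F_*$ all appear. To obtain $F_*$ from hypothesis (2) alone you must show that $\rho(\alpha S,\alpha^2P)\ge0$ for all $\alpha$ implies $\rho(\alpha S^*,\alpha^2P^*)\ge0$ for all $\alpha$; this is true but not formally obvious, and inside your cycle you may not yet invoke (1) when proving $(2)\Rightarrow(4)$. Third, the verifications that $\tilde S$ commutes with $\tilde P$, that $\tilde S=\tilde S^*\tilde P$, and that $\|\tilde S\|\le2$ reduce to the operator identities $D_PS=FD_P+F^*D_PP$, $SD_{P^*}=D_{P^*}F_*^*+PD_{P^*}F_*$, $F^*P^*=P^*F_*$, $PF=F_*^*P|_{\mathcal D_P}$, and the norm bound on $M_{F+F^*z}$ --- items $(a_1)$--$(a_4)$, $(b_1)$--$(b_3)$ in the proof of Theorem \ref{main-dilation-theorem}; none of these is done in your proposal, and they are where the work lies. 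Finally, beware of circularity: in this paper the existence of $F$ and $F_*$ (Theorem \ref{fundamentalop}) is deduced from the Agler--Young model (Theorem \ref{model}), which itself rests on the theorem you are trying to prove, so you cannot simply import Theorem \ref{main-dilation-theorem}; the direct operator-equation derivation of $F$ and $F_*$ from (2) must be supplied.
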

The reason of calling it Stinespring dilation is that part-(4) of
the above theorem was obtained by an application of Stinespring's
theorem (\cite{paulsen}). For a
proof of the above theorem, see Theorem 1.2 in \cite{ay-jfa}.\\

In Theorem \ref{main-dilation-theorem}, which is the main result
of this paper, we provide such Hilbert spaces $\mathcal H_-,\,
\mathcal H_+$ and such operators $\tilde S, \tilde P$ explicitly.
Indeed, the dilation space $\mathcal K \,(=\mathcal H_-\oplus
\mathcal H \oplus \mathcal H_+)$ can be chosen to be $l^2(\mathcal
D_P)\oplus \mathcal H \oplus l^2(\mathcal D_{P^*})$ which is same
as the dilation space of the minimal unitary dilation of $P$ and
the operator $\tilde P$ can precisely be the minimal unitary
dilation of $P$. Here $\mathcal D_P=\overline{\textup{Ran}}\,D_P
,\textup{ where } D_P=(I-P^*P)^{\frac{1}{2}}$. In order to
construct an operator for $\tilde S$, i.e. to remove the stars
from the matrix of $\tilde S$, we need a couple of operators
$F,\,F_*$ which turned out to be the unique solutions to the
operator equations
\begin{align}\label{fundamental:eqn}
S-S^*P &=D_PXD_P, \quad X\in\mathcal L(\mathcal D_P) \\ \nonumber
S^*-SP^* &=D_{P^*}X_*D_{P^*}, \quad X_*\in \mathcal L(\mathcal
D_{P^*})
\end{align}
respectively (Theorem \ref{fundamentalop}). Such an operator
equation (\ref{fundamental:eqn}) was solved in
\cite{tirtha-sourav} (Theorem 4.1 in \cite{tirtha-sourav})
independently to get a $\Gamma$-isometric dilation of a
$\Gamma$-contraction (Theorem 4.3 in \cite{tirtha-sourav}) but it
was not a normal $b\Gamma$-dilation. The unique operators $F$ and
$F_*$ were called the \textit{fundamental operators} of the
$\Gamma$-contractions $(S,P)$ and $(S^*,P^*)$ respectively. The
fundamental operators of $(S,P)$ and $(S^*,P^*)$ play the key role
in the construction of the operator that works for $\tilde S$.
Since the dilation space is precisely the space of minimal unitary
dilation of $P$, the dilation naturally becomes minimal. This is
somewhat surprising because it is a dilation in several variables.\\

As the title of the paper indicates, we obtain Sz.-Nagy dilation
of a $\Gamma$-contraction $(S,P)$ from its Stinespring dilation in
the sense that we obtain the key ingredient in the dilation, the
fundamental operator, as a consequence of Stinespring dilation.
Indeed, Theorem \ref{stinespring} leads to the following model for
$\Gamma$-contractions (Theorem 3.2 in \cite{ay-jot}).
\begin{thm}[Agler and Young \cite{ay-jot}]\label{model}
Let $(S,P)$ be a $\Gamma$-contraction on a Hilbert space $\mathcal
H$. There exists a Hilbert space $\mathcal K$ containing $\mathcal
H$ and a $\Gamma$-co-isometry $(S^{\flat},P^{\flat})$ on $\mathcal
K$ and an orthogonal decomposition $\mathcal K_1\oplus \mathcal
K_2$ of $\mathcal K$ such that:
\begin{enumerate}
\item[(i)] $\mathcal H$ is a common invariant subspace of
$S^{\flat}$ and $P^{\flat}$ and $S=S^{\flat}|_{\mathcal
H},\;P=P^{\flat}|_{\mathcal H}$; \item[(ii)] $\mathcal K_1$ and
$\mathcal K_2$ reduce both $S^{\flat}$ and $P^{\flat}$;
\item[(iii)] $(S^{\flat}|_{\mathcal K_1},P^{\flat}|_{\mathcal
K_1})$ is a $\Gamma$-unitary; \item[(iv)] there exists a Hilbert
space $E$ and an operator $A$ on $E$ such that $\omega(A)\leq 1$
and $(S^{\flat}|_{\mathcal K_2},P^{\flat}|_{\mathcal K_2})$ is
unitarily equivalent to $(T_{\psi},T_z)$ acting on $H^2(E)$, where
$\psi \in\mathcal L(E)$ is given by $\psi(z)=A^*+A\bar z, \quad
z\in \bar{\mathbb D}$.
\end{enumerate}
\end{thm}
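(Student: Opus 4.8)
The plan is to build the model by running the Stinespring dilation of Theorem \ref{stinespring} in reverse on the adjoint pair $(S^*,P^*)$ and then dualizing, so that invariance is traded for co-invariance. Concretely, I would first apply Theorem \ref{stinespring}(4) to $(S^*,P^*)$ (which is again a $\Gamma$-contraction) to obtain commuting normal operators $(\tilde S, \tilde P)$ on some $\mathcal K_0 \supseteq \mathcal H$ with joint spectrum in $b\Gamma$, lower-triangular with respect to $\mathcal H$; passing to adjoints and relabeling gives a $\Gamma$-contraction $(\tilde S^*, \tilde P^*)$ on $\mathcal K_0$ whose restriction to the co-invariant $\mathcal H$ is $(S,P)$. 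The point is that the ``lower-right'' corner of that triangular normal dilation, namely the compression to $\mathcal H_+\oplus\mathcal H$ (in the notation of the excerpt), is itself a $\Gamma$-co-isometry: it is the restriction of a $\Gamma$-unitary to a jointly invariant subspace, hence a $\Gamma$-isometry for the adjoint, i.e.\ a $\Gamma$-co-isometry, and it contains $\mathcal H$ as a common invariant subspace on which it restricts to $(S,P)$. Call this $\Gamma$-co-isometry $(S^\flat, P^\flat)$ on $\mathcal K$; this gives item (i).

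Next I would invoke the Wold-type decomposition for $\Gamma$-isometries (established by Agler and Young in their structure theory of $\Gamma$-contractions): every $\Gamma$-isometry $(T_1,T_2)$ splits orthogonally as a $\Gamma$-unitary part and a pure part, where the pure part is unitarily equivalent to $(T_\psi, T_z)$ on a vectorial Hardy space $H^2(E)$ with symbol $\psi(z)=A^*+Az$ for some operator $A$ with numerical radius $\omega(A)\le 1$. Applying this to the $\Gamma$-isometry $(S^{\flat *}, P^{\flat *})$ on $\mathcal K$ yields a reducing orthogonal decomposition $\mathcal K=\mathcal K_1\oplus\mathcal K_2$ with the $\Gamma$-unitary part on $\mathcal K_1$ — giving items (ii) and (iii) — and, after taking adjoints again, item (iv), noting that the adjoint of $(T_{A^*+Az}, T_z)$ on $H^2(E)$ is the same type of model with $\psi(z)=A^*+A\bar z$ on the conjugate/boundary description used in the statement. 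The second bullet of (iv), the numerical radius bound $\omega(A)\le 1$, is exactly the condition that falls out of the $\Gamma$-isometry structure, so no extra work is needed there beyond citing it.

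The main obstacle, and the step that deserves the most care, is verifying that the compression of the triangular normal dilation to the ``upper block'' $\mathcal H_+ \oplus \mathcal H$ is genuinely a $\Gamma$-isometry (equivalently a $\Gamma$-co-isometry before the final adjoint), rather than merely a $\Gamma$-contraction: one must check that this subspace is \emph{invariant} for the normal pair $(\tilde S, \tilde P)$ (not just semi-invariant), which is precisely the content of the lower-triangular form of $\tilde S, \tilde P$ with respect to $\mathcal H_- \oplus \mathcal H \oplus \mathcal H_+$ when read off correctly for the adjoint pair. A related subtlety is that the Stinespring dilation is not a priori minimal, so $\mathcal K$ may be larger than necessary; this is harmless for the \emph{existence} statement of Theorem \ref{model} but one should be careful not to claim minimality here. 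Finally, I would assemble the pieces: (i) from the invariance of $\mathcal H$ in $(S^\flat,P^\flat)$, (ii)--(iii) from the $\Gamma$-unitary summand in the Wold decomposition, and (iv) from identifying the pure $\Gamma$-isometry with $(T_z, T_{A^*+Az})$ and dualizing to the stated form $\psi(z)=A^*+A\bar z$ on $H^2(E)$.
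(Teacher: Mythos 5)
This theorem is quoted in the paper from Agler--Young \cite{ay-jot} without proof, so there is no in-paper argument to compare against; measured against the original Agler--Young derivation, your outline is essentially the right one: pass from the triangular normal $b\Gamma$-dilation of Theorem \ref{stinespring}(4) to the co-invariant block containing $\mathcal H$, observe that the compression there is the adjoint of the restriction of a $\Gamma$-unitary to a joint invariant subspace (hence, by definition, a $\Gamma$-co-isometry extending $(S,P)$ on the invariant subspace $\mathcal H$), then apply the Wold decomposition for $\Gamma$-isometries and the Toeplitz model for the pure part. Two small points deserve correction. First, your bookkeeping of adjoints is off: if you apply Theorem \ref{stinespring}(4) to $(S^*,P^*)$ and then dualize, the corner you must compress to is $\mathcal H_-\oplus\mathcal H$, not $\mathcal H_+\oplus\mathcal H$; following your steps literally produces the model for $(S^*,P^*)$ rather than $(S,P)$. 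The cleaner route is to apply the dilation theorem to $(S,P)$ itself: then $\mathcal H\oplus\mathcal H_+$ is invariant for $(\tilde S^*,\tilde P^*)$, its restriction there is a $\Gamma$-isometry, and the adjoint of that restriction is the desired $(S^\flat,P^\flat)$ with $S^\flat|_{\mathcal H}=S$, $P^\flat|_{\mathcal H}=P$ read off from the zero below the middle diagonal entry. This is a relabeling slip, not a conceptual gap. Second, within the logical ordering of this paper the Toeplitz model $(T_{A+A^*z},T_z)$ with $\omega(A)\le 1$ for a pure $\Gamma$-isometry is Theorem \ref{model1}, which is stated after (and whose statement invokes) the fundamental operator built from Theorem \ref{model}; to avoid apparent circularity you should note that the part of that model you need (existence of some $A$ with $\omega(A)\le 1$, obtained from $T_\varphi=T_\varphi^*T_z$ and $\|\varphi\|_\infty\le 2$ via Lemma \ref{basicnrlemma}) is independent of the fundamental operator, or simply cite it from \cite{ay-jot} directly. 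Your remark that the triangular form makes the relevant subspace genuinely invariant (not merely semi-invariant) for the adjoint pair is exactly the right point of care, and the observation that minimality is neither claimed nor needed is correct.
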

In section 3, we establish the existence and uniqueness of
fundamental operator $F$ of $(S,P)$ (Theorem \ref{fundamentalop})
by an application of Theorem \ref{model}. Moreover, we show that
the numerical radius of $F$ is not greater than 1.\\

In section 5, we describe a functional model for
$\Gamma$-contractions (Theorem \ref{model2}), which can be treated
as a concrete formulation of the model given as Theorem
\ref{model} above. We specify the model space and model operators.
Also a model is provided for a \textit{pure}
$\Gamma$-\textit{isometry} $(\hat S,\hat P)$ in terms of Toeplitz
operators $(T_{\varphi},T_z)$ defined on the vectorial Hardy space
$H^2(\mathcal D_{{\hat P}^*})$, where the multiplier function is
given as $\varphi(z)={\hat F}_*^*+{\hat F}_*z$, ${\hat F}_*$ being
the fundamental operator of $(\hat S,\hat P)$. This model is
obtained independently in a simpler way without an application of
the functional model for pure $\Gamma$-contractions (see Theorem
3.1 in \cite{tirtha-sourav1}). Let us mention that the class of
pure $\Gamma$-isometries parallels the class of pure isometries in
one variable operator
theory.\\

In section 2, we recall some preliminary results from the
literature of $\Gamma$-contraction and these results will be used
in sequel.

\section{Preliminary results on $\Gamma$-contractions}

In the literature of $\Gamma$-contraction, \cite{ay-jfa, ay-ems,
ay-blm, ay-jot}, there are special classes of
$\Gamma$-contractions like $\Gamma$-unitaries,
$\Gamma$-isometries, $\Gamma$-co-isometries which are analogous to
unitaries, isometries and co-isometries of single variable
operator theory.

\begin{defn}\label{dboundary} A commuting pair $(S,P)$, defined on a Hilbert space $\mathcal H$, is
called a $\Gamma$-{\em unitary} if $S$ and $P$ are normal
operators and $\sigma(S,P)$ is contained in the distinguished
boundary $b\Gamma$.
\end{defn}
\begin{defn}
A commuting pair $(S,P)$ is called a $\Gamma$-{\em isometry} if it
the restriction of $\Gamma$-unitary to a joint invariant subspace,
i.e. a $\Gamma$-isometry is a pair of commuting operators which
can be extended to a $\Gamma$-unitary.
\end{defn}
\begin{defn}
A $\Gamma$-{\em co-isometry} is the adjoint of a
$\Gamma$-isometry, i.e. $(S,P)$ is a $\Gamma$-co-isometry if
$(S^*,P^*)$ is a $\Gamma$-isometry.
\end{defn}
\begin{defn}
A $\Gamma$-isometry $(S,P)$ is said to be {\em pure} if $P$ is a
pure isometry. A {\em pure} $\Gamma$-{\em co-isometry} is the
adjoint of a pure $\Gamma$-isometry.
\end{defn}
\begin{defn}
Let $(S,P)$ be a $\Gamma$-contraction on a Hilbert space $\mathcal
H$. A commuting pair $(T,V)$ defined on $\mathcal K$ is said to be
a $\Gamma$-isometric (or $\Gamma$-unitary) extension if $\mathcal
H\subseteq \mathcal K$, $(T,V)$ is a $\Gamma$-isometry (or a
$\Gamma$-unitary) and $T|_{\mathcal H}=S,\, V|_{\mathcal H}=P$.
\end{defn}
\noindent We are now going to state some useful results on
$\Gamma$-contractions without proofs because the proofs are either
routine or could be found out in \cite{ay-jfa} and \cite{ay-jot}.
\begin{prop}
If $T_1,T_2$ are commuting contractions then their symmetrization
$(T_1+T_2,T_1T_2)$ is a $\Gamma$-contraction.
\end{prop}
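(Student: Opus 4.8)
The plan is to derive the defining inequality of a $\Gamma$-contraction directly from And\^o's inequality for a pair of commuting contractions. Write $(S,P)=(T_1+T_2,\,T_1T_2)=\pi(T_1,T_2)$, where $\pi(z_1,z_2)=(z_1+z_2,z_1z_2)$ is the symmetrization map; note $S$ and $P$ commute because $T_1$ and $T_2$ do. By the polynomial convexity of $\Gamma$ recorded after Definition of a $\Gamma$-contraction, it suffices to verify that $\sigma(S,P)\subseteq\Gamma$ and that $\|q(S,P)\|\le\sup_{(z_1,z_2)\in\Gamma}|q(z_1,z_2)|$ for every polynomial $q$ in two variables.

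For the spectral containment I would apply the spectral mapping theorem for the Taylor joint spectrum to the polynomial map $\pi$: since $T_1,T_2$ are commuting contractions, $\sigma(T_1,T_2)\subseteq\overline{\mathbb D}^2$, hence $\sigma(S,P)=\sigma(\pi(T_1,T_2))=\pi(\sigma(T_1,T_2))\subseteq\pi(\overline{\mathbb D}^2)=\Gamma$, the last equality being the very definition of $\Gamma$.

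For the norm inequality, given a polynomial $q$ set $\tilde q(z_1,z_2)=q(z_1+z_2,\,z_1z_2)$, again a polynomial in two variables, so that $q(S,P)=q(T_1+T_2,T_1T_2)=\tilde q(T_1,T_2)$. By And\^o's dilation theorem the closed bidisc $\overline{\mathbb D}^2$ is a (complete) spectral set for the commuting contraction pair $(T_1,T_2)$; equivalently, $(T_1,T_2)$ admits a commuting unitary dilation $(U_1,U_2)$ on some $\mathcal K\supseteq\mathcal H$ with $p(T_1,T_2)=P_{\mathcal H}p(U_1,U_2)|_{\mathcal H}$ for all polynomials $p$. Hence $\|\tilde q(T_1,T_2)\|\le\|\tilde q(U_1,U_2)\|=\sup_{(z_1,z_2)\in\sigma(U_1,U_2)}|\tilde q(z_1,z_2)|\le\sup_{\overline{\mathbb D}^2}|\tilde q|$, using normality of $(U_1,U_2)$. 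Finally, since $\pi$ maps $\overline{\mathbb D}^2$ onto $\Gamma$, $\sup_{\overline{\mathbb D}^2}|\tilde q|=\sup_{\overline{\mathbb D}^2}|q\circ\pi|=\sup_{\Gamma}|q|$, which yields $\|q(S,P)\|\le\sup_{\Gamma}|q|$, as required.

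There is essentially no real obstacle here: the entire content is carried by And\^o's theorem, and the pair $(U_1+U_2,U_1U_2)$ in fact serves as an explicit normal $b\Gamma$-dilation of $(S,P)$, since $\pi(\mathbb T^2)=b\Gamma$. The only point needing a touch of care is the spectral-mapping step for the Taylor spectrum; this is standard for polynomial maps of commuting tuples, but one can also sidestep it by arguing directly from the dilation, noting $\sigma(U_1+U_2,U_1U_2)=\pi(\sigma(U_1,U_2))\subseteq b\Gamma\subseteq\Gamma$ and that passing to the compression on $\mathcal H$ cannot enlarge the polynomially convex hull of the spectrum.
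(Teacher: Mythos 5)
Your argument is correct, and it is essentially the standard one: the paper states this proposition without proof (deferring to Agler--Young), and the proofs in those references run exactly through And\^o's theorem, the identity $q(T_1+T_2,T_1T_2)=(q\circ\pi)(T_1,T_2)$, and the fact that $\pi(\overline{\mathbb D}^2)=\Gamma$, just as you do. Your closing observation that polynomial convexity of $\Gamma$ lets one dispense with the Taylor-spectrum step entirely is also consistent with the remark the paper makes immediately after the definition of a $\Gamma$-contraction.
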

Note that, all $\Gamma$-contractions do not arise as a
symmetrization of two contractions. The following result
characterizes the $\Gamma$-contractions which can be obtained as a
symmetrization of two commuting contractions.
\begin{lem}[\cite{ay-jot}]
Let $(S,P)$ be a $\Gamma$-contraction. Then
$(S,P)=(T_1+T_2,T_1T_2)$ for a pair of commuting operators
$T_1,T_2$ if and only if $S^2-4P$ has a square root that commutes
with both $S$ and $P$.
\end{lem}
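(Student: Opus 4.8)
The plan is to prove both implications by a direct operator computation, modelled on the scalar fact that $z_1$ and $z_2$ are the roots of $w^2 - sw + p$ when $s = z_1 + z_2$ and $p = z_1 z_2$, in which case the discriminant is $s^2 - 4p = (z_1 - z_2)^2$.

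For the forward implication, I would assume $(S,P) = (T_1 + T_2,\, T_1 T_2)$ with $T_1 T_2 = T_2 T_1$ and take $D := T_1 - T_2$ as the candidate square root. Expanding $D^2$ and using commutativity of $T_1$ and $T_2$ gives $D^2 = (T_1 + T_2)^2 - 4 T_1 T_2 = S^2 - 4P$. It then remains to check that $D$ commutes with both $S$ and $P$: commutation with $S = T_1 + T_2$ is immediate, and commutation with $P = T_1 T_2$ is a one-line computation that again uses only $T_1 T_2 = T_2 T_1$.

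For the converse, I would assume $S^2 - 4P = D^2$ with $D$ commuting with $S$ and $P$, and set $T_1 := \tfrac{1}{2}(S + D)$ and $T_2 := \tfrac{1}{2}(S - D)$. Then $T_1 + T_2 = S$ is clear, and because $D$ commutes with $S$ one obtains $T_1 T_2 = \tfrac{1}{4}(S^2 - D^2) = \tfrac{1}{4}\bigl(S^2 - (S^2 - 4P)\bigr) = P$; the same expression results for $T_2 T_1$, so $(T_1, T_2)$ is a commuting pair whose symmetrization is $(S,P)$.

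This argument is entirely elementary, so there is no genuine obstacle to overcome. The only points worth flagging are that in the forward direction one should check that $D = T_1 - T_2$ commutes with $P$ and not merely with $S$, so that the conclusion matches the hypothesis of the converse exactly, and that in the converse one must verify that $T_1$ and $T_2$ genuinely commute (which again follows from $D$ commuting with $S$). One may also note that the hypothesis ``$D$ commutes with $P$'' is actually redundant given ``$D$ commutes with $S$'', since then $4P = S^2 - D^2$ commutes with $D$ too.
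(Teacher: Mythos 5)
Your proof is correct and is exactly the routine argument the paper alludes to when it omits the proof (deferring to Agler--Young): take $D=T_1-T_2$ in one direction and $T_{1,2}=\tfrac12(S\pm D)$ in the other, with commutativity checked directly. Your observation that commutation of $D$ with $P$ follows automatically from commutation with $S$ (via $4P=S^2-D^2$) is a correct and worthwhile remark.
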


Here is a set of characterizations for $\Gamma$-unitaries.
\begin{thm}[\cite{ay-jot}] \label{G-unitary}
Let $(S,P)$ be a pair of commuting operators defined on a Hilbert
space $\mathcal{H}.$ Then the following are equivalent:
\begin{enumerate}

\item $(S,P)$ is a $\Gamma$-unitary\; ; \item there exist
    commuting unitary operators $U_{1}$ and $
    U_{2}$ on $\mathcal{H}$ such that
\[
S= U_{1}+U_{2},\quad P= U_{1}U_{2}\;
\]
\item $P$ is
unitary,\;$S=S^*P,\;$\;and $r(S)\leq2,$ \; where
    $r(S)$ is the spectral radius of $S$.
\end{enumerate}
\end{thm}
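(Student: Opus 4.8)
The plan is to establish the cyclic chain $(1)\Rightarrow(2)\Rightarrow(3)\Rightarrow(1)$. For $(1)\Rightarrow(2)$: if $(S,P)$ is a $\Gamma$-unitary then $S$ and $P$ are commuting normal operators, so by the spectral theorem for a commuting normal pair they are simultaneously unitarily equivalent to multiplication by the coordinate functions $(s,p)$ on $L^2(E)$ for a spectral measure $E$ supported on $\sigma(S,P)\subseteq b\Gamma$. Every point $(s,p)\in b\Gamma$ is $(z_1+z_2,z_1z_2)$ with $|z_1|=|z_2|=1$, and $z_1,z_2$ are exactly the two roots $\tfrac12\bigl(s\pm\sqrt{s^2-4p}\bigr)$ of $\lambda^2-s\lambda+p$. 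Fixing a Borel branch of the square root on $\mathbb C$ and composing with the Borel function $(s,p)\mapsto s^2-4p$ yields two Borel unimodular functions $u_1,u_2$ on $b\Gamma$ with $u_1+u_2=s$ and $u_1u_2=p$; the corresponding multiplication operators $U_1,U_2$ are commuting unitaries with $S=U_1+U_2$ and $P=U_1U_2$.

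For $(2)\Rightarrow(3)$: this is a direct computation. Since $U_1,U_2$ are commuting unitaries, $P=U_1U_2$ satisfies $P^*P=PP^*=I$, so $P$ is unitary; using $U_i^*U_i=I$ and commutativity,
\[
S^*P=(U_1^*+U_2^*)U_1U_2=U_2^*U_2U_1+U_1^*U_1U_2=U_1+U_2=S .
\]
Finally $\|S\|\le\|U_1\|+\|U_2\|=2$, hence $r(S)\le\|S\|\le 2$.

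For $(3)\Rightarrow(1)$: assume $P$ is unitary, $S=S^*P$ and $r(S)\le 2$. Taking adjoints in $S=S^*P$ gives $S^*=P^*S$, and right-multiplying $S=S^*P$ by $P^*$ gives $S^*=SP^*$; hence $SP^*=P^*S$, so $S$ commutes with $P^*$ and therefore with $P$, and taking adjoints $S^*$ commutes with $P$ and $P^*$. Moreover, since $P$ is unitary,
\[
SS^*=(S^*P)(P^*S)=S^*(PP^*)S=S^*S ,
\]
so $S$ is normal. Thus $\{S,S^*,P,P^*\}$ is a commuting family and $(S,P)$ admits a joint spectral measure $E$ supported on $\sigma(S,P)$. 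The operator identities $PP^*=I$, $S=S^*P$ and the bound $r(S)=\|S\|\le 2$ (valid since $S$ is normal) translate into the pointwise relations $|p|=1$, $s=\bar s\,p$ and $|s|\le 2$ on $\sigma(S,P)$. It remains to check that the locus $\{(s,p):|p|=1,\ s=\bar s\,p,\ |s|\le 2\}$ equals $b\Gamma$: writing $p=e^{2i\theta}$, the relation $s=\bar s\,p$ forces $t:=se^{-i\theta}$ to be real with $|t|=|s|\le 2$, and then $z_{1,2}:=\tfrac12 e^{i\theta}\bigl(t\pm i\sqrt{4-t^2}\bigr)$ are unimodular with $z_1+z_2=s$ and $z_1z_2=p$; the reverse inclusion is the identity $\bar z_1+\bar z_2=(z_1+z_2)/(z_1z_2)$ valid when $|z_1|=|z_2|=1$. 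Hence $\sigma(S,P)\subseteq b\Gamma$, and by Definition \ref{dboundary} the pair $(S,P)$ is a $\Gamma$-unitary.

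I expect the delicate point to lie in $(3)\Rightarrow(1)$: the rigorous passage from operator identities to pointwise identities on the joint spectrum, which first requires verifying that $\{S,S^*,P,P^*\}$ is a commuting family (so that the joint spectral theorem applies and, in particular, that $r(S)=\|S\|$ controls the first coordinate of $\sigma(S,P)$). The Borel square-root selection in $(1)\Rightarrow(2)$ is a minor technicality that should nonetheless be stated, and the implication $(2)\Rightarrow(3)$ together with the final set-equality $b\Gamma=\{(s,p):|p|=1,\ s=\bar s\,p,\ |s|\le 2\}$ are routine computations.
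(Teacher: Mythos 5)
Your proof is correct. The paper itself states this theorem without proof, quoting it from Agler and Young \cite{ay-jot}, and your cyclic argument $(1)\Rightarrow(2)\Rightarrow(3)\Rightarrow(1)$ is essentially the standard one from that source: a Borel selection of the roots of $\lambda^2-s\lambda+p$ for $(1)\Rightarrow(2)$, a direct computation for $(2)\Rightarrow(3)$, and for $(3)\Rightarrow(1)$ the verification that $S$ is normal and that $\{S,S^*,P,P^*\}$ commutes (your derivation of $SP^*=P^*S$ and $SS^*=S^*S$ from $S=S^*P$ and $P$ unitary is the key step), followed by the Gelfand/spectral-measure translation of the operator identities into the pointwise description of $b\Gamma$. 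The only point worth making explicit is that in $(1)\Rightarrow(2)$ the existence of the joint spectral measure for the commuting normal pair uses Fuglede's theorem to get $SP^*=P^*S$; everything else is as you have it.
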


\noindent We now present a structure theorem for the class of
$\Gamma$-isometries.
\begin{thm}[\cite{ay-jot}] \label{Gamma-isometry}
Let $S,P$ be commuting operators on a Hilbert space $\mathcal{H}.$
The following statements are all equivalent:\begin{enumerate}
\item $(S,P)$ is a $\Gamma$-isometry; \item if $P$ has
Wold-decomposition with respect to the orthogonal decomposition
$\mathcal H=\mathcal H_1\oplus \mathcal H_2$ such that
$P|_{\mathcal H_1}$ is unitary and $P|_{\mathcal H_2}$ is pure
isometry then $\mathcal H_1,\,\mathcal H_2$ reduce $S$ also and
$(S|_{\mathcal H_1},P|_{\mathcal H_1})$ is a $\Gamma$-unitary and
$(S|_{\mathcal H_2},P|_{\mathcal H_2})$ is a pure
$\Gamma$-isometry; \item $P$ is an isometry\;,\;$S=S^*P$ and
$r(S)\leq2$.
\end{enumerate}
\end{thm}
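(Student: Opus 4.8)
The plan is to prove the equivalences by running the loop $(1)\Rightarrow(3)\Rightarrow(2)\Rightarrow(1)$, using the characterization of $\Gamma$-unitaries (Theorem \ref{G-unitary}) as the main tool, and singling out the implication $(3)\Rightarrow(1)$ as an auxiliary fact to be proved en route. I expect the real work to be in $(3)\Rightarrow(1)$: starting only from the algebraic relations in $(3)$, one must build an honest $\Gamma$-unitary extension, and the single step there that is not purely formal is checking that the extended pair again satisfies $\tilde S=\tilde S^{*}\tilde P$.

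First I would prove $(1)\Rightarrow(3)$. If $(S,P)$ is a $\Gamma$-isometry, extend it to a $\Gamma$-unitary $(\tilde S,\tilde P)$ on some $\mathcal K\supseteq\mathcal H$ with $\mathcal H$ a common invariant subspace. By Theorem \ref{G-unitary}, $\tilde P$ is unitary, $\tilde S=\tilde S^{*}\tilde P$ and $r(\tilde S)\le 2$. Restricting $\tilde P$ to the invariant subspace $\mathcal H$ makes $P$ an isometry; since $S^{n}=\tilde S^{n}|_{\mathcal H}$ one gets $r(S)\le r(\tilde S)\le 2$; and because $\tilde Ph=Ph$ already lies in $\mathcal H$, compressing the identity $\tilde Sh=\tilde S^{*}\tilde Ph$ to $\mathcal H$ yields $Sh=S^{*}Ph$. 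Hence $(3)$ holds.

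Next, $(3)\Rightarrow(1)$. I would take $U$ on $\mathcal K\supseteq\mathcal H$ to be the minimal unitary extension of the isometry $P$, so that $\mathcal H$ is $U$-invariant, $U|_{\mathcal H}=P$, and $\bigcup_{n\ge0}U^{-n}\mathcal H$ is dense in $\mathcal K$. On that dense subspace define $\tilde S$ by $\tilde S(U^{-n}h):=U^{-n}Sh$; commutativity $SP=PS$ makes this unambiguous, the bound $\|\tilde S(U^{-n}h)\|=\|Sh\|$ makes $\tilde S$ bounded on $\mathcal K$, and by construction $\tilde S$ extends $S$, commutes with $U$, and satisfies $\|\tilde S^{k}\|=\|S^{k}\|$, so $r(\tilde S)=r(S)\le 2$. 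The remaining identity $\tilde S=\tilde S^{*}U$ is where the hypothesis enters: after inserting vectors of the form $U^{-n}h$ it reduces exactly to the relation $S^{*}=P^{*}S$, i.e.\ the adjoint of $S=S^{*}P$. Theorem \ref{G-unitary} then says $(\tilde S,U)$ is a $\Gamma$-unitary, and it extends $(S,P)$, so $(1)\Leftrightarrow(3)$.

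Finally, $(3)\Rightarrow(2)\Rightarrow(1)$. Assuming $(3)$, I would Wold-decompose $P$ as $\mathcal H=\mathcal H_{1}\oplus\mathcal H_{2}$ with unitary part $\mathcal H_{1}=\bigcap_{n}P^{n}\mathcal H$ and pure-isometry part $\mathcal H_{2}$. That $S\mathcal H_{1}\subseteq\mathcal H_{1}$ follows from $Sx=SP^{n}y=P^{n}Sy$ for $x=P^{n}y$, and $S^{*}\mathcal H_{1}\subseteq\mathcal H_{1}$ then follows from $S^{*}=P^{*}S$ together with $P^{*}\mathcal H_{1}\subseteq\mathcal H_{1}$; so $\mathcal H_{1}$, and hence $\mathcal H_{2}$, reduces $S$. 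On $\mathcal H_{1}$ the pair meets condition $(3)$ of Theorem \ref{G-unitary} and is therefore a $\Gamma$-unitary; on $\mathcal H_{2}$ the pair meets condition $(3)$ of the present theorem with $P|_{\mathcal H_{2}}$ a pure isometry, so by the implication $(3)\Rightarrow(1)$ it is a $\Gamma$-isometry, hence a pure $\Gamma$-isometry — this is $(2)$. The converse $(2)\Rightarrow(1)$ is then easy: a pure $\Gamma$-isometry is a $\Gamma$-isometry, each summand extends to a $\Gamma$-unitary, and the direct sum of those extensions is a $\Gamma$-unitary extending $(S,P)$. It is worth noting that $(3)\Rightarrow(2)$ really does use the auxiliary implication $(3)\Rightarrow(1)$, in order to certify the $\mathcal H_{2}$-summand as a genuine pure $\Gamma$-isometry and not merely a commuting pair satisfying the relations; an alternative there is to use the commutant of a shift to realize that summand as $(T_{\varphi},T_{z})$ with $\varphi(z)=A^{*}+Az$, $\omega(A)\le1$, but going through $(3)\Rightarrow(1)$ is cleaner and uniform.
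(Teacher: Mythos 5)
Your argument is correct, but note that the paper itself offers no proof to compare against: Theorem \ref{Gamma-isometry} is quoted from \cite{ay-jot} in the preliminaries, with the explicit remark that such results are stated without proof. Judged on its own, your cycle $(1)\Rightarrow(3)\Rightarrow(2)\Rightarrow(1)$, with $(3)\Rightarrow(1)$ as the workhorse, is sound and is essentially the Agler--Young route. The key construction --- extending $S$ over the minimal unitary extension $U$ of $P$ by $\tilde S(U^{-n}h)=U^{-n}Sh$ on the increasing union $\bigcup_n U^{-n}\mathcal H$, with well-definedness from $SP=PS$, norm preservation of all powers giving $r(\tilde S)=r(S)$, and the identity $\tilde S=\tilde S^*U$ reducing on pairs $U^{-n}h$, $U^{-m}k$ to $S=S^*P$ --- checks out in all cases of the exponents, and Theorem \ref{G-unitary}(3) then certifies $(\tilde S,U)$ as a $\Gamma$-unitary. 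Two small points you leave implicit and should record: in $(3)\Rightarrow(2)$ you need $r(S|_{\mathcal H_i})\le 2$, which follows because $\mathcal H_i$ reduces $S$ so that $\sigma(S|_{\mathcal H_i})\subseteq\sigma(S)$; and the implication $(2)\Rightarrow(1)$ is only meaningful under the tacit standing assumption that $P$ is an isometry (otherwise the conditional in $(2)$ is vacuous), a defect of the theorem's phrasing rather than of your proof. Your closing observation is also apt: one could instead realize the pure part as $(T_{A^*+Az},T_z)$ with $\omega(A)\le 1$, which is exactly the model the paper later develops in Theorems \ref{converse} and \ref{model1}, but routing everything through $(3)\Rightarrow(1)$ is cleaner here.
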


\section{The fundamental operator of a $\Gamma$-contraction}

Let us recall that the {\em numerical radius} of an operator $T$
on a Hilbert space $\mathcal{H}$ is defined by
\[
\omega(T) = \sup \{|\langle Tx,x \rangle|\; : \;
\|x\|_{\mathcal{H}}\leq 1\}.
\]
It is well known that
\begin{eqnarray}\label{nradius}
r(T)\leq \omega(T)\leq \|T\| \quad \text{ and } \quad
\frac{1}{2}\|T\|\leq \omega(T)\leq \|T\|,
\end{eqnarray}
where $r(T)$ is the spectral radius of $T$. The following is an
interesting result about the numerical radius of an operator and
this will be used in this section.
\begin{lem} \label{basicnrlemma} The numerical radius of an operator $X$ is not greater than
1 if and only if  Re $\beta X \le I$ for all complex numbers
$\beta$ of modulus $1$.
\end{lem}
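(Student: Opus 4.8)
The plan is to translate both sides of the claimed equivalence into statements about the scalar quantities $\langle Xx,x\rangle$ and then to use the elementary fact that for $z\in\mathbb C$ one has $\operatorname{Re}z\le|z|$, with equality attained after multiplying $z$ by a suitable unimodular constant. First I would observe that for $|\beta|=1$ the operator $\operatorname{Re}(\beta X):=\tfrac{1}{2}(\beta X+\bar\beta X^{*})$ is self-adjoint, and that the operator inequality $\operatorname{Re}(\beta X)\le I$ unwinds, by definition, to the scalar inequality $\operatorname{Re}\big(\beta\langle Xx,x\rangle\big)=\big\langle \operatorname{Re}(\beta X)x,x\big\rangle\le\|x\|^{2}$ for every $x\in\mathcal H$; by homogeneity it is enough to check this for unit vectors $x$.

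For the forward implication I would assume $\omega(X)\le 1$ and fix a unit vector $x$ together with a unimodular $\beta$. Since $|\beta\langle Xx,x\rangle|=|\langle Xx,x\rangle|\le\omega(X)\le 1=\|x\|^{2}$, we get $\operatorname{Re}\big(\beta\langle Xx,x\rangle\big)\le|\langle Xx,x\rangle|\le\|x\|^{2}$, and by the reduction above this is precisely $\operatorname{Re}(\beta X)\le I$ for all $|\beta|=1$.

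For the converse I would assume $\operatorname{Re}(\beta X)\le I$ for every $\beta$ of modulus $1$, fix a unit vector $x$, and write $\langle Xx,x\rangle=re^{i\theta}$ with $r\ge 0$. Choosing $\beta=e^{-i\theta}$ makes $\beta\langle Xx,x\rangle=r$ a nonnegative real number, so $r=\operatorname{Re}\big(\beta\langle Xx,x\rangle\big)\le\|x\|^{2}=1$, i.e. $|\langle Xx,x\rangle|\le 1$; as $x$ ranges over all unit vectors this yields $\omega(X)\le 1$. I do not expect a genuine obstacle here: the only points requiring care are the correct passage between the operator inequality $\operatorname{Re}(\beta X)\le I$ and its scalar form, and the rotation trick fixing $\beta$ in terms of $x$ in the converse; the rest is a one-line estimate.
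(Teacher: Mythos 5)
Your proof is correct and follows essentially the same route as the paper: reduce the operator inequality $\operatorname{Re}(\beta X)\le I$ to the scalar statement $\operatorname{Re}\bigl(\beta\langle Xx,x\rangle\bigr)\le \|x\|^2$, and for the converse choose $\beta$ to rotate $\langle Xx,x\rangle$ onto the positive real axis. The only difference is that you write out the forward implication in detail, which the paper dismisses as obvious.
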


\begin{proof}
It is obvious that $\omega(X)\leq1$ implies that $\mbox{Re }\beta
X\leq I$ for all $\beta\in\mathbb T$. We prove the other way. By
hypothesis, $\langle$Re $ \beta X h,h\rangle \leq1$ for all
$h\in\mathcal{H}$ with $\|h\|\leq1$ and for all $\beta \in
\mathbb{T}$. Note that $\langle$Re $ \beta X h,h\rangle =$ Re $
\beta \langle  X h,h\rangle$. Write $\langle Xh,h \rangle = e^{i
\varphi_h}|\langle Xh,h\rangle|$ for some $\varphi_h
\in\mathbb{R},$ and then choose $\beta = e^{- i \varphi_h}$. Then
we get  $|\langle Xh,h \rangle|\leq1$ and this holds for each
$h\in\mathcal{H}$ with $\|h\|\leq1.$ Hence done. \end{proof}

We are going to prove the existence and uniqueness of solution to
the operator equation
\[
S-S^*P=D_PXD_P, \quad X\in \mathcal L(\mathcal D_P)
\]
by an application of a famous result due to Douglas, Muhly and
Pearcy. Let us again mention here that the same operator equation
has been solved in \cite{tirtha-sourav} (Theorem 4.2)
independently by using operator Fejer-Riesz Theorem. Here is the
famous result of Douglas, Muhly and Pearcy.

\begin{prop}[Douglas, Muhly and Pearcy, \cite{DMP}]\label{Douglas}
For $i=1,2$, let $T_i$ be a contraction on a Hilbert space
$\mathcal H_i$, and let $X$ be an operator mapping $\mathcal H_2$
into $\mathcal H_1$. A necessary and sufficient condition that the
operator on $\mathcal H_1\oplus \mathcal H_2$ defined by the
matrix
\[
\begin{pmatrix} T_1&X\\0&T_2 \end{pmatrix}
\]
be a contraction is that there exist a contraction $C$ mapping
$\mathcal H_2$ into $\mathcal H_1$ such that
\[
X=\sqrt{I_{\mathcal
H_1}-T_1T_1^*}C\sqrt{I_{\mathcal H_2}-T_2^*T_2}.
\]
\end{prop}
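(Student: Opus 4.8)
The plan is to collapse the contractivity of the block operator $M=\begin{pmatrix} T_1 & X\\ 0 & T_2\end{pmatrix}$ into a single vector inequality and then read the required factorization off that inequality. Writing $D_{T_2}=(I_{\mathcal H_2}-T_2^*T_2)^{1/2}$ and $D_{T_1^*}=(I_{\mathcal H_1}-T_1T_1^*)^{1/2}$, I would first note that for $h_1\in\mathcal H_1$, $h_2\in\mathcal H_2$ one has $\|M(h_1\oplus h_2)\|^2=\|T_1h_1+Xh_2\|^2+\|T_2h_2\|^2$, so, using the defect identity $\|h_2\|^2-\|T_2h_2\|^2=\|D_{T_2}h_2\|^2$, the operator $M$ is a contraction exactly when
\[
\|T_1h_1+Xh_2\|^2\leq\|h_1\|^2+\|D_{T_2}h_2\|^2\qquad\text{for all }h_1\in\mathcal H_1,\ h_2\in\mathcal H_2;
\]
call this inequality $(\dagger)$. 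Both implications of the proposition will come out of $(\dagger)$.

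For sufficiency, assume $X=D_{T_1^*}CD_{T_2}$ with $\|C\|\leq 1$. Then $T_1h_1+Xh_2=\begin{pmatrix}T_1 & D_{T_1^*}\end{pmatrix}(h_1\oplus CD_{T_2}h_2)$, and the point I would exploit is that the row operator $\begin{pmatrix}T_1 & D_{T_1^*}\end{pmatrix}$ is a coisometry, since $T_1T_1^*+D_{T_1^*}^2=I_{\mathcal H_1}$; hence it has norm at most $1$, which gives $\|T_1h_1+Xh_2\|^2\leq\|h_1\|^2+\|CD_{T_2}h_2\|^2\leq\|h_1\|^2+\|D_{T_2}h_2\|^2$, i.e.\ $(\dagger)$.

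For necessity I would extract the factorization from $(\dagger)$ in two stages. Putting $h_1=0$ gives $\|Xh_2\|\leq\|D_{T_2}h_2\|$ for every $h_2$, so the assignment $D_{T_2}h_2\mapsto Xh_2$ extends by continuity to a contraction $Y\colon\mathcal D_{T_2}\to\mathcal H_1$ with $X=YD_{T_2}$, where $\mathcal D_{T_2}=\overline{\textup{Ran}}\,D_{T_2}$. Substituting $X=YD_{T_2}$ back into $(\dagger)$ and letting $g=D_{T_2}h_2$ run over a dense subset of $\mathcal D_{T_2}$ while $h_1$ stays free, I obtain $\|T_1h_1+Yg\|^2\leq\|h_1\|^2+\|g\|^2$ on $\mathcal H_1\oplus\mathcal D_{T_2}$; that is, the row $\begin{pmatrix}T_1 & Y\end{pmatrix}$ is a contraction, equivalently $YY^*\leq I_{\mathcal H_1}-T_1T_1^*=D_{T_1^*}^2$. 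By Douglas's range-inclusion lemma this forces $Y=D_{T_1^*}C_0$ for some contraction $C_0\colon\mathcal D_{T_2}\to\mathcal H_1$. Finally I would set $C=C_0P$, with $P\colon\mathcal H_2\to\mathcal D_{T_2}$ the orthogonal projection; then $\|C\|\leq 1$ and $D_{T_1^*}CD_{T_2}=D_{T_1^*}C_0D_{T_2}=YD_{T_2}=X$, as required.

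The routine parts — expanding $\|M(h_1\oplus h_2)\|^2$, the defect identity, and checking that $\begin{pmatrix}T_1 & D_{T_1^*}\end{pmatrix}$ is a coisometry — I would not dwell on. The one step needing genuine care is the pair of ``divide out a defect operator'' factorizations; each is an instance of the elementary fact that $\|Ah\|\leq\|Bh\|$ for all $h$ implies $A=CB$ with $\|C\|\leq 1$, and the mild subtlety is that $\textup{Ran}\,D_{T_2}$ is only dense, not closed, in $\mathcal D_{T_2}$, which is precisely why the continuity extension of $Y$ must be written out. That is the sole obstacle, and it is a mild one.
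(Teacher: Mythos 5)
Your argument is correct. Note, however, that the paper does not prove this proposition at all: it is quoted as a known result of Douglas, Muhly and Pearcy with a citation to their paper, so there is no in-text proof to compare against. Your proof is the standard one for this lemma --- reduce contractivity of the block matrix to the inequality $\|T_1h_1+Xh_2\|^2\leq\|h_1\|^2+\|D_{T_2}h_2\|^2$, peel off $D_{T_2}$ on the right by the density/continuity argument, recognize the resulting row $\begin{pmatrix}T_1 & Y\end{pmatrix}$ as a contraction so that $YY^*\leq D_{T_1^*}^2$, and apply Douglas's factorization lemma to peel off $D_{T_1^*}$ on the left --- and both directions, including the coisometry estimate for sufficiency and the handling of the merely dense range of $D_{T_2}$, are handled correctly.
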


\begin{thm}[Existence and Uniqueness]\label{fundamentalop}
For a $\Gamma$-contraction $(S,P)$ defined on $\mathcal H$, the
operator equation
\[
S-S^*P=D_PXD_P
\]
has a unique solution $F$ in $\mathcal L(\mathcal
D_P)$ and $\omega(F)\leq 1$.
\end{thm}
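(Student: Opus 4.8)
The plan is to establish existence first and then uniqueness, and finally the numerical radius bound, each by invoking a result already available. For existence, I would start from Theorem~\ref{model}: a $\Gamma$-contraction $(S,P)$ dilates to a $\Gamma$-co-isometry, and in fact it suffices to work with the adjoint picture, so that $(S^*,P^*)$ has a $\Gamma$-isometric extension $(T,V)$ on a space $\mathcal K\supseteq\mathcal H$. By Theorem~\ref{Gamma-isometry}(3) a $\Gamma$-isometry satisfies $T=T^*V$ with $V$ an isometry; decomposing $V$ via its Wold decomposition as in part (2) and using part (iv) of Theorem~\ref{model}, the pure part of the $\Gamma$-isometry looks like $(T_\psi,T_z)$ on $H^2(E)$ with $\psi(z)=A^*+Az$ and $\omega(A)\le 1$. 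The point is that on such a model the relation $T-T^*V=D_V(\text{something})D_V$ holds with the ``something'' built from $A$, and then compressing back to $\mathcal H$ produces a solution $F$ to $S-S^*P=D_PXD_P$. I would carry this out by writing the compression explicitly and checking that the compressed operator lands in $\mathcal L(\mathcal D_P)$, i.e. that it kills $\ker D_P$, which follows because $S-S^*P$ factors through $D_P$ on both sides.

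For uniqueness, I would argue purely algebraically: if $D_PX_1D_P=D_PX_2D_P$ with $X_1,X_2\in\mathcal L(\mathcal D_P)$, then $D_P(X_1-X_2)D_P=0$, so $X_1-X_2$ vanishes on $\overline{\operatorname{Ran}}\,D_P=\mathcal D_P$; since $X_1-X_2$ is by definition an operator on $\mathcal D_P$, it is zero. This shows the solution in $\mathcal L(\mathcal D_P)$ is unique the moment it exists.

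For the bound $\omega(F)\le 1$, I would use Lemma~\ref{basicnrlemma}: it is enough to show $\operatorname{Re}\beta F\le I_{\mathcal D_P}$ for every $\beta\in\mathbb T$, equivalently $D_P(\operatorname{Re}\beta F)D_P\le D_P^2=I-P^*P$ as operators on $\mathcal H$ (since $D_P$ has dense range, the inequality on $\mathcal D_P$ is equivalent to the compressed inequality on $\mathcal H$). Now $D_P(\operatorname{Re}\beta F)D_P=\operatorname{Re}\big(\beta(S-S^*P)\big)=\tfrac12\big(\beta(S-S^*P)+\bar\beta(S^*-P^*S)\big)$, so the required inequality is exactly $2(I-P^*P)-\beta(S-S^*P)-\bar\beta(S^*-P^*S)\ge 0$. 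Writing $\beta=\bar\alpha/|\alpha|$ or simply $\beta=e^{-i\theta}$ and comparing with the quantity $\rho(\alpha S,\alpha^2P)=2(I-\bar\alpha\alpha P^*P)-(\alpha S-\bar\alpha^2\alpha\cdot\ldots)$ — more precisely, part (2) of Theorem~\ref{stinespring} says $\rho(\alpha S,\alpha^2 P)\ge 0$ for all $\alpha\in\mathbb D$, and letting $|\alpha|\to 1$ along $\alpha=e^{i\theta}$ gives $2(I-P^*P)-(e^{i\theta}S-e^{-i\theta}S^*P)-(e^{-i\theta}S^*-e^{i\theta}P^*S)\ge 0$ — this is the desired inequality with $\beta=e^{-i\theta}$. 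Hence $\operatorname{Re}\beta F\le I$ for all $\beta\in\mathbb T$, so $\omega(F)\le 1$.

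I expect the main obstacle to be the existence half: extracting a bona fide solution $F\in\mathcal L(\mathcal D_P)$ from Theorem~\ref{model} requires carefully tracking how the co-isometric (or isometric) extension interacts with the defect operator $D_P$ under compression, and in particular verifying that the natural candidate operator on $\mathcal K$ compresses to something that both solves the equation and is well-defined on $\mathcal D_P$ rather than on a larger space. Alternatively — and this may be cleaner — one can try to get existence directly from Proposition~\ref{Douglas} applied to an appropriate $2\times 2$ operator matrix built from $S$ and $P$ (using that $(\alpha S,\alpha^2 P)$-type contractivity forces the relevant block matrix to be a contraction), which would hand over the factorization $S-S^*P=D_PCD_P$ with $C$ a contraction on $\mathcal D_P$, giving $\|F\|\le 1$ immediately and a fortiori $\omega(F)\le 1$; I would present whichever route makes the bookkeeping shortest, but I suspect the $\omega(F)\le 1$ conclusion is genuinely sharper than $\|F\|\le 1$ in general, so the numerical-radius argument via Theorem~\ref{stinespring}(2) above is the one that must be included.
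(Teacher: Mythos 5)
Your uniqueness argument and your proof of $\omega(F)\leq 1$ are correct and essentially identical to the paper's: uniqueness is the two-line density argument, and the numerical radius bound comes from extending $\rho(\alpha S,\alpha^2P)\geq 0$ to $|\alpha|=1$, reading it as $D_P\,(I-\operatorname{Re}\beta F)\,D_P\geq 0$, and invoking Lemma \ref{basicnrlemma} (your sign slips in expanding $\rho(e^{i\theta}S,e^{2i\theta}P)$ are immaterial). The gap is exactly where you feared it: the existence half. The step ``compressing back to $\mathcal H$ produces a solution $F$'' is not bookkeeping, because compression does not distribute over products: from $T-T^*V=D_VXD_V$ on the extension space $\mathcal K$ and $P_{\mathcal H}(T-T^*V)|_{\mathcal H}=S-S^*P$, $P_{\mathcal H}D_V^2|_{\mathcal H}=D_P^2$, you cannot conclude that $P_{\mathcal H}D_VXD_V|_{\mathcal H}$ has the form $D_P(\cdot)D_P$; and your justification that the candidate ``kills $\ker D_P$ because $S-S^*P$ factors through $D_P$ on both sides'' assumes the factorization you are trying to prove. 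The paper's resolution is to use Proposition \ref{Douglas} as a two-way dictionary: having computed explicitly in the model of Theorem \ref{model} that $T-T^*V=D_VXD_V$ with $\|X\|\leq 2$, it concludes that $\begin{pmatrix}V^*&\tfrac{1}{2}(T-T^*V)\\ 0&V\end{pmatrix}$ is a contraction on $\mathcal K\oplus\mathcal K$; its compression to $\mathcal H\oplus\mathcal H$ is $\begin{pmatrix}P^*&\tfrac{1}{2}(S-S^*P)\\ 0&P\end{pmatrix}$, still a contraction; and Proposition \ref{Douglas} applied in the reverse direction then hands back $\tfrac{1}{2}(S-S^*P)=D_PCD_P$ with $\|C\|\leq 1$, i.e.\ $F=2C$. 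That double application of Douglas--Muhly--Pearcy is the missing idea.

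Your proposed shortcut --- applying Proposition \ref{Douglas} directly to a $2\times 2$ matrix built from $S$ and $P$ to get $S-S^*P=D_PCD_P$ with $C$ a contraction --- cannot work as stated, and your own suspicion about it is right: it would yield $\|F\|\leq 1$, which is strictly stronger than $\omega(F)\leq 1$ and false in general. Indeed, Theorem \ref{converse} produces, for any $A$ with $\omega(A)\leq 1$, a $\Gamma$-contraction whose fundamental operator is $A$; taking $A$ to be a nilpotent Jordan block of norm $2$ and numerical radius $1$ kills the stronger bound. So the correct normalization in the matrix is $\tfrac{1}{2}(S-S^*P)$, the contractivity of that matrix must be \emph{derived} from the extension (it is not an obvious consequence of $\rho(\alpha S,\alpha^2P)\geq 0$ alone), and the conclusion is $\|F\|\leq 2$, with $\omega(F)\leq 1$ obtained separately as you did.
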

\begin{proof}
By Theorem \ref{model}, there is a $\Gamma$-co-isometry $(T,V)$ on
a larger Hilbert space $\mathcal K\supseteq \mathcal H$ such that
$\mathcal H$ is a joint invariant subspace of $T$ and $V$ and
\[
S=T|_{\mathcal H},\; P=V|_{\mathcal H}.
\]
Also $\mathcal K$ has orthogonal decomposition $\mathcal K
=\mathcal K_1 \oplus \mathcal K_2$ and
\[
T=\begin{pmatrix}
 T_1&0\\0&T_2 \end{pmatrix}, \; V=\begin{pmatrix} V_1&0\\0&V_2
 \end{pmatrix} \textup{ on } \mathcal K=\mathcal K_1\oplus \mathcal
 K_2
\]
such that $(T_1,V_1)$ is a $\Gamma$-unitary and there is a Hilbert
space $E$ and a unitary $U_1:\mathcal K_2 \rightarrow H^2(E)$ such
that
\[
T_2^*=U_1^*T_{\varphi}U_1,\; V_2^*=U_1^*T_zU_1,
\]
where $\varphi(z)=A+A^*z$ for some $A\in \mathcal B(E)$ with
numerical radius of $A$ being not greater than $1$. Clearly
$T_2=U_1^*T_{\varphi}^*U_1$ and $V_2=U_1^*T_z^*U_1$. Again
$H^2(E)$ can be identified with $l^2(E)$ and consequently the
operator pair $(T_{\varphi},T_z)$ can be identified with
$(M_{\varphi},M_z)$, where $M_{\varphi}$ and $M_z$ are defined on
$l^2(E)$ in the following way:
\[
M_{\varphi}=\begin{bmatrix} A&0&0&\cdots \\
A^* &A&0&\cdots\\ 0&A^*&A&\cdots \\ \vdots& \vdots&\vdots&\ddots
\end{bmatrix}, \; M_z=\begin{bmatrix} 0&0&0&\cdots\\ I&0&0&\cdots\\ 0&I&0&\cdots\\
\vdots&\vdots&\vdots&\ddots \end{bmatrix}.
\]
Therefore we can say that there is a unitary $U:\mathcal
K_2\rightarrow l^2(E)$ such that $T_2=U^*M_{\varphi}^*U, \textup{
and } V_2=U^*M_z^*U$. Now

\begin{align*}
T_2-T_2^*V_2 &= U^*\begin{bmatrix} A&A^*&0&\cdots\\ 0&A&A^*&\cdots\\
0&0&A&\cdots\\ \vdots&\vdots&\vdots&\ddots
\end{bmatrix}U \\& \quad - U^*\begin{bmatrix} A^*&0&0&\dots\\ A&A^*&0&\cdots\\ 0&A&A^*&\cdots\\ \vdots&\vdots&\vdots&\ddots \end{bmatrix}
\begin{bmatrix} 0&I&0&\cdots \\ 0&0&I&\cdots \\ 0&0&0&\cdots \\ \vdots&\vdots&\vdots&\ddots
\end{bmatrix}U \\ &= U^*\begin{bmatrix} A&A^*&0&\cdots \\ 0&A&A^*&\cdots \\ 0&0&A&\cdots\\ \vdots&\vdots&\vdots&\ddots
\end{bmatrix}U
- U^*\begin{bmatrix} 0&A^*&0&\cdots \\ 0&A&A^*&\cdots \\
0&0&A&\cdots\\ \vdots&\vdots&\vdots&\ddots
\end{bmatrix}U \\ &= U^*\begin{bmatrix} A&0&0&\cdots \\ 0&0&0&\cdots \\ 0&0&0&\cdots \\
\vdots&\vdots&\vdots&\ddots \end{bmatrix}U.
\end{align*}

Also
\[
D_{V_2}^2=I-V_2^*V_2=U^*(I-M_zM_z^*)U=U^*
\begin{bmatrix} I&0&0&\cdots \\ 0&0&0&\cdots\\
0&0&0&\cdots \\ \vdots&\vdots&\vdots&\ddots
\end{bmatrix} U.
\]
It is merely said that $D_{V_2}^2=D_{V_2}$ and therefore if we set
\[
X=U^*\begin{bmatrix} A^*&0&0&\hdots \\ 0&0&0&\hdots \\ 0&0&0&\hdots \\
\vdots&\vdots&\vdots&\ddots
\end{bmatrix}U
\]
then $X\in\mathcal L(\mathcal D_{V_2})$ and $
T_2-T_2^*V_2=D_{V_2}XD_{V_2} $. Since $(T_1,V_1)$ is a
$\Gamma$-unitary, Theorem \ref{G-unitary} guarantees that
$T_1=T_1^*V_1$. Therefore,
\[
T-T^*V=\begin{bmatrix} T_1-T_1^*V_1&0\\ 0&T_2-T_2^*V_2
\end{bmatrix}=\begin{bmatrix} 0&0\\0&T_2-T_2^*V_2 \end{bmatrix}.
\]
Also
\[
D_V^2=\begin{bmatrix} I_{\mathcal
K_1}-V_1^*V_1&0\\0&I_{\mathcal K_2}-V_2^*V_2
\end{bmatrix}=\begin{bmatrix} 0&0\\0&I_{\mathcal K_2}-V_2^*V_2 \end{bmatrix}.
\]
Therefore, $\mathcal D_V=\mathcal D_{V_2}$ and $X$ satisfies the
relation $T-T^*V=D_VXD_V$. Also
\[
\|T-T^*V\|=\|D_VXD_V\|\leq \|X\|\leq 2, \quad \textup{ by relation
}(\ref{nradius})\textup{ as } \omega(X)\leq 1.
\]
Now consider the matrix
\[
J=\begin{bmatrix} V^*&\dfrac{T-T^*V}{2} \\
0&V \end{bmatrix}
\]
defined on $\mathcal K\oplus \mathcal K$. Since
\[
\frac{T-T^*V}{2}=D_V\frac{X}{2}D_V=(I-V^*V)^{\frac{1}{2}}\frac{X}{2}(I-V^*V)^{\frac{1}{2}},
\]
where $\dfrac{T-T^*V}{2}$ and $\dfrac{X}{2}$ are contractions, by
Proposition \ref{Douglas}, the matrix $J$ is a contraction. Again
let us consider another matrix $J_H$ defined on $\mathcal
H\oplus\mathcal H$ by
\[
J_H=\begin{bmatrix} P_{\mathcal H}V^*|_{\mathcal H}&P_{\mathcal
H}(\dfrac{T-T^*V}{2})|_{\mathcal H}
\\ 0& P_{\mathcal H}V|_{\mathcal H}
\end{bmatrix}.
\]
Since $(T,V)$ is $\Gamma$-co-isometric extension of $(S,P)$, we
have that
\[
J_H=\begin{bmatrix} P^*&\dfrac{S-S^*P}{2} \\ 0&P
\end{bmatrix}.
\]
For
\[
\begin{bmatrix} h_1\\h_2 \end{bmatrix}\in\mathcal H\oplus
\mathcal H,
\]
we have that
\begin{align*}
\|J_H\begin{bmatrix} h_1\\ h_2 \end{bmatrix}\|^2
&=\|\begin{bmatrix} P_{\mathcal H}V^*h_1+P_{\mathcal H} (
\dfrac{T-T^*V}{2} )h_2
\\ P_{\mathcal H}Vh_2\end{bmatrix}\|^2 \\& = \| P_{\mathcal H}(V^*h_1+ \frac{T-T^*V}{2}h_2)
\|^2+\|P_{\mathcal H}Vh_2 \|^2 \\& \leq \|
V^*h_1+\frac{T-T^*V}{2}h_2 \|^2+\| Vh_2 \|^2, \text{ since }
P_{\mathcal H} \text{ is a projection } \\& \leq \|
\begin{bmatrix}h_1 \\ h_2 \end{bmatrix} \|^2, \text{ since } J
\textup{ is a contraction}.
\end{align*}
Therefore, $J_H$ is a contraction. Applying Proposition
\ref{Douglas} again we get an operator $F\in\mathcal L(\mathcal
H)$ such that $\dfrac{F}{2}$ is a contraction and that
\[
\dfrac{S-S^*P}{2}=D_P\dfrac{F}{2}D_P.
\]
Obviously the domain of $F$
can be specified to be $\mathcal D_P\subseteq \mathcal H$. Hence
\[
S-S^*P=D_PFD_P
\]
where $F\in\mathcal L(\mathcal D_P)$ and the
existence of the fundamental operator of $(S,P)$ is guaranteed.\\

For uniqueness let there be two such solutions $F$ and $F_1$. Then
\[
{D}_P\tilde{F}{D}_P=0, \quad \text{where } \tilde{F}=F-F_1 \in
\mathcal L(\mathcal D_P).
\] Then
\[
\langle \tilde{F}{D}_Ph,{D}_Ph^{\prime} \rangle=\langle
{D}_P\tilde{F}{D}_Ph,h^{\prime} \rangle =0
\]
which shows that $\tilde{F}=0$ and hence $F=F_1$.\\

To show that the numerical radius of $F$ is not greater than 1,
note that $\rho(\alpha S,{\alpha}^2P)\geq 0$, for all $\alpha\in
\mathbb D$, by Theorem \ref{stinespring} and the inequality can be
extended by continuity to all points in $\overline{\mathbb D}$.
Therefore, in particular for $\beta \in \mathbb T$, we have
$$D_P^2\geq \textup{Re }\beta (S-S^*P) =\textup{Re }\beta
(D_PFD_P) $$ which implies that
$$ D_P(I_{\mathcal D_P} - \textup{Re }\beta F)D_P\geq 0.$$
Therefore,
\[
\langle ( I_{\mathcal D_P} - \text{Re }(\beta F )) D_P h , D_Ph
\rangle = \langle D_P (I_{\mathcal D_P} - \text{Re }( \beta F ))
D_P h , h \rangle \geq 0
\]
and consequently we obtain
\[
\text{Re } \beta F \leq I_{\mathcal D_P}, \textup{ for all
}\beta\in \mathbb T.
\]
Therefore by Lemma \ref{basicnrlemma}, the
numerical radius of $F$ is not greater than 1.

\end{proof}

\begin{rem}
\em{The fundamental operator of a $\Gamma$-isometry or a
$\Gamma$-unitary $(S,P)$ is the zero operator because $S=S^*P$ in
this case}.
\end{rem}
The following result is obvious and a proof to this can be found
in \cite{tirtha-sourav1}.
\begin{prop}\label{end-prop}
Let $(S,P)$ and $(S_1,P_1)$ be two $\Gamma$-contractions on a
Hilbert space $\mathcal H$ and let $F$ and $F_1$ be their
fundamental operators respectively. If $(S,P)$ and $(S_1,P_1)$ are
unitarily equivalent then so are $F$ and $F_1$.
\end{prop}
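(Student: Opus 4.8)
The plan is to unwind the definitions and use the uniqueness half of Theorem~\ref{fundamentalop}. Suppose $U\colon\mathcal H\to\mathcal H$ is a unitary with $US U^* = S_1$ and $UPU^* = P_1$. The first step is to observe that $U$ carries $D_P$ to $D_{P_1}$: indeed $D_{P_1}^2 = I - P_1^*P_1 = U(I-P^*P)U^* = U D_P^2 U^*$, so $D_{P_1} = U D_P U^*$ by uniqueness of positive square roots, and consequently $U$ restricts to a unitary $W\colon\mathcal D_P\to\mathcal D_{P_1}$ (it maps $\overline{\operatorname{Ran}}\,D_P$ onto $\overline{\operatorname{Ran}}\,D_{P_1}$).

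Next I would verify that $W F W^*$ solves the defining equation for the fundamental operator of $(S_1,P_1)$. Starting from $S - S^*P = D_P F D_P$ and conjugating by $U$, one gets
\[
S_1 - S_1^* P_1 = U(S-S^*P)U^* = U D_P F D_P U^* = (U D_P U^*)(U F U^*)(U D_P U^*) = D_{P_1}(W F W^*) D_{P_1},
\]
where in the last step I use that $F$ is supported on $\mathcal D_P$ so $U F U^* = W F W^*$ as an operator in $\mathcal L(\mathcal D_{P_1})$. Since $W F W^* \in \mathcal L(\mathcal D_{P_1})$ and the fundamental operator of $(S_1,P_1)$ is the \emph{unique} element of $\mathcal L(\mathcal D_{P_1})$ satisfying $S_1 - S_1^*P_1 = D_{P_1} X D_{P_1}$, we conclude $F_1 = W F W^*$. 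Thus $F$ and $F_1$ are unitarily equivalent via $W$.

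There is essentially no obstacle here; the only point requiring a little care is bookkeeping about the domains, namely that conjugation by the ambient unitary $U$ agrees with conjugation by its restriction $W$ when applied to operators living on the defect spaces, and that $U$ genuinely maps $\mathcal D_P$ onto $\mathcal D_{P_1}$ (which follows from $UD_PU^*=D_{P_1}$). Everything else is a direct substitution, which is why the authors call the proof trivial.
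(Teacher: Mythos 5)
Your proof is correct and is exactly the routine verification the paper has in mind when it declares the result trivial and omits the proof: conjugate the defining equation by the intertwining unitary, use uniqueness of positive square roots to get $D_{P_1}=UD_PU^*$ (hence $U$ restricts to a unitary between the defect spaces), and invoke the uniqueness part of Theorem \ref{fundamentalop}. No gaps.
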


\section{Geometric construction of normal dilation}

 In this section, we present an explicit construction of a normal $b\Gamma$-dilation, i.e. a $\Gamma$-unitary
 dilation of a $\Gamma$-contraction. In the literature, the $\Gamma$-unitary and
$\Gamma$-isometric dilation of a $\Gamma$-contraction are defined
in the following way:
 \begin{defn}
 Let $(S,P)$ be a $\Gamma$-contraction on a Hilbert space $\mathcal H$. A pair of commuting
 operators $(T,U)$ defined on a Hilbert space $\mathcal K\supseteq \mathcal H$ is said to be
 a $\Gamma$-{\em unitary dilation} of $(S,P)$ if $(T,U)$ is a $\Gamma$-unitary and
 $P_{\mathcal H}(T^mU^n)|_{\mathcal H}=S^mP^n, \quad
 n=0,1,2,\dots$. Moreover, the dilation will be called {\em minimal} if
 \[
 \mathcal K=\overline{\text{span}}\{ T^mU^nh:\; h\in\mathcal H,\; m,n=0,\pm 1, \pm 2, \cdots\},
 \]
 where $T^{-m},U^{-n}$ for positive integers $m,n$ are defined
 as ${T^*}^{m}$ and ${U^*}^n$ respectively.
 A $\Gamma$-{\em isometric dilation} of a $\Gamma$-contraction
 is defined in a similar way where the word $\Gamma$-unitary is
 replaced by $\Gamma$-isometry. But when we talk about
 minimality of such a $\Gamma$-isometric dilation, the powers of the
 dilation operators will run over non-negative integers only.
\end{defn}

In the dilation theory of a single contraction (\cite{nazy}), it
is a notable fact that if $V$ is the minimal isometric dilation of
a contraction $T$, then $V^*$ is a co-isometric extension of $P$.
The other way is also true, i.e. if $V$ is a co-isometric
extension of $T$, then $V^*$ is an isometric dilation of $T^*$.
Here we shall see that an analogue holds for a
$\Gamma$-contraction.

\begin{prop}\label{easyprop3}
Let $(T,V)$ on $\mathcal K\supseteq \mathcal H$ be a
$\Gamma$-isometric dilation of a $\Gamma$-contraction $(S,P)$. If
$(T,V)$ is minimal, then $(T^*,V^*)$ is a $\Gamma$-co-isometric
extension of $(S^*,P^*)$. Conversely, if $(T^*,V^*)$ is a
$\Gamma$-co-isometric extension of $(S^*,P^*)$ then $(T,V)$ is a
$\Gamma$-isometric dilation of $(S,P)$.
\end{prop}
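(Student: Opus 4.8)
The plan is to mirror the classical one-variable argument relating minimal isometric dilations and co-isometric extensions, using the structure theorems for $\Gamma$-isometries and $\Gamma$-unitaries already recorded in Section 2. For the forward implication, suppose $(T,V)$ on $\mathcal K \supseteq \mathcal H$ is a minimal $\Gamma$-isometric dilation of the $\Gamma$-contraction $(S,P)$. First I would show that $\mathcal H$ is invariant under $T^*$ and $V^*$, which is equivalent to showing that the compression formula $P_{\mathcal H}(T^mV^n)|_{\mathcal H} = S^m P^n$ self-improves to the genuine extension property $T^*|_{\mathcal H}$ etc.; in fact the standard trick is to prove that $\mathcal H$ is a \emph{semi-invariant} subspace and then use minimality to squeeze out more. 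Concretely, minimality says $\mathcal K = \overline{\textup{span}}\{T^mV^n h : h \in \mathcal H,\ m,n \geq 0\}$; writing $\mathcal L = \mathcal K \ominus \mathcal H$, I want to check that $\mathcal L$ is invariant under both $T$ and $V$, so that $\mathcal H$ is co-invariant, i.e. invariant for $T^*$ and $V^*$.

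To get the invariance of $\mathcal L$ under $T$ and $V$, I would argue as in the single-operator case: for $h \in \mathcal H$ and $\ell \in \mathcal L$, the compression identity gives $\langle V h, V^* \ell\rangle$-type relations, but a cleaner route is to use part (3) of Theorem~\ref{Gamma-isometry}: $V$ is an isometry, $T = T^*V$, and $r(T) \leq 2$. Since $V$ is an isometry dilating the contraction $P$ (via $P_{\mathcal H}V^n|_{\mathcal H} = P^n$), and the dilation is minimal, $V$ is the minimal isometric dilation of $P$; by the classical Sz.-Nagy theory $V^*$ is then a co-isometric extension of $P^*$, i.e. $\mathcal H$ is invariant for $V^*$ and $V^*|_{\mathcal H} = P^*$. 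So it remains only to check that $\mathcal H$ is invariant for $T^*$ and $T^*|_{\mathcal H} = S^*$. Here I would use $T = T^*V$ together with the established fact $V^*|_{\mathcal H} = P^*$: for $h \in \mathcal H$, $P_{\mathcal H}T^* h = P_{\mathcal H}(V^*T)^* h$... more directly, from $T = T^*V$ we get $T^* = V^*T$, and one computes $\langle T^* h, \ell\rangle$ for $\ell \in \mathcal L$ using that $T$ maps $\mathcal H$ into $\mathcal K$ and the dilation relations; alternatively, one shows $\langle T h', h\rangle = \langle h', S^* h\rangle$ for all $h,h' \in \mathcal H$, which combined with the invariance of $\mathcal L$ under $V$ and the identity $T=T^*V$ forces $T\mathcal L \subseteq \mathcal L$. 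Then $(T^*,V^*)$ restricted to $\mathcal H$ equals $(S^*,P^*)$, and since $(T^*,V^*)$ is a $\Gamma$-co-isometry by definition (adjoint of a $\Gamma$-isometry), this is exactly a $\Gamma$-co-isometric extension of $(S^*,P^*)$.

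For the converse, assume $(T^*,V^*)$ is a $\Gamma$-co-isometric extension of $(S^*,P^*)$ on $\mathcal K \supseteq \mathcal H$, meaning $(T^*,V^*)$ is a $\Gamma$-co-isometry, $\mathcal H$ is invariant for $T^*$ and $V^*$, and $T^*|_{\mathcal H} = S^*$, $V^*|_{\mathcal H} = P^*$. Then $(T,V)$ is a $\Gamma$-isometry (its adjoint pair is a $\Gamma$-co-isometry, equivalently $(T,V)$ is the adjoint of the $\Gamma$-isometry $(T^*,V^*)^* $... being careful: a $\Gamma$-co-isometry is by definition the adjoint of a $\Gamma$-isometry, so $(T,V) = ((T^*,V^*))^*$ is a $\Gamma$-isometry). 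That $\mathcal H$ is invariant for $T^*, V^*$ is equivalent to $\mathcal L = \mathcal K \ominus \mathcal H$ being invariant for $T, V$, hence for $h, h' \in \mathcal H$ and $m,n \geq 0$, $P_{\mathcal H}T^m V^n h'$ can be computed by noting $T^m V^n h' - (S^m P^n$-part$)$ lies in $\mathcal L$: more precisely one proves $P_{\mathcal H}(T^m V^n)|_{\mathcal H} = S^m P^n$ by induction on $m+n$ using that $T\mathcal H \subseteq \mathcal H$ fails in general but $P_{\mathcal H}T|_{\mathcal H} = S$ follows from $T^*|_{\mathcal H} = S^*$ by taking adjoints, and the semi-invariance lets the compressions multiply. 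This gives that $(T,V)$ is a $\Gamma$-isometric dilation of $(S,P)$; minimality is not claimed in the converse direction so nothing further is needed.

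The main obstacle, and the step I would spend the most care on, is the passage from the \emph{weak} compression identity $P_{\mathcal H}(T^mV^n)|_{\mathcal H} = S^mP^n$ (all $m,n \geq 0$) to the \emph{strong} statement that $\mathcal L$ is jointly invariant under $T$ and $V$ — this is where minimality is essential and where the analogy with the scalar case must be made rigorous. In one variable this is the classical fact that the minimal isometric dilation has its residual part behaving correctly; here one must handle the pair simultaneously, but the saving grace is that $V$ alone is the minimal isometric dilation of $P$ (minimality of the pair forces minimality for $V$ since the span is generated by the same vectors, the $T$-powers being redundant once we know $T = T^*V$ on the $\Gamma$-isometry), so the invariance of $\mathcal L$ under $V$ is classical, and then the single extra identity $T = T^*V$ propagates it to $T$. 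I would also double-check that the spectral condition $\sigma(T,V) \subseteq b\Gamma$ (needed for ``$\Gamma$-unitary'' if one wanted that) is not at issue here since we only deal with $\Gamma$-isometries/co-isometries, for which the clean characterizations in Theorems~\ref{G-unitary} and \ref{Gamma-isometry} (``$P$ isometry, $S = S^*P$, $r(S) \leq 2$'') are available and transfer immediately under taking adjoints and restrictions.
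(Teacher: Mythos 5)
Your overall goal for the forward direction is right — upgrade the compression identity $P_{\mathcal H}(T^mV^n)|_{\mathcal H}=S^mP^n$ to co-invariance of $\mathcal H$ — but you never actually close the step you yourself flag as the main obstacle. Your proposed route is to first establish $V^*|_{\mathcal H}=P^*$ by claiming that $V$ alone is the minimal isometric dilation of $P$, and to justify this by saying the $T$-powers in the minimality span are ``redundant once we know $T=T^*V$.'' That justification does not work: $T^*$ need not commute with $V$ for a $\Gamma$-isometry (in the model $(T_{A+A^*z},T_z)$ on $H^2(E)$ the operators $T_{A+A^*z}^*$ and $T_z$ do not commute unless $A$ is constant-like), so there is no way to rewrite $T^mV^nh$ as a combination of $V^k h'$ with $h'\in\mathcal H$ from $T=T^*V$ alone. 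Whether minimality of the pair forces minimality of $V$ is a nontrivial assertion that would itself need a proof; and without minimality of $V$ the classical fact you invoke ($V^*$ extends $P^*$) genuinely fails (bilateral shift dilating $P=0$ is the standard counterexample). The second half of your argument — propagating co-invariance from $V$ to $T$ via $T=T^*V$ — is likewise left as a list of ``alternatively, one shows\dots'' branches, none of which is carried out.

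The paper avoids all of this with a short direct computation: for $h\in\mathcal H$,
\begin{equation*}
SP_{\mathcal H}(T^mV^nh)=S(S^mP^nh)=S^{m+1}P^nh=P_{\mathcal H}(T^{m+1}V^nh)=P_{\mathcal H}T(T^mV^nh),
\end{equation*}
so $SP_{\mathcal H}=P_{\mathcal H}T$ on the total set supplied by minimality, hence on all of $\mathcal K$ (similarly $PP_{\mathcal H}=P_{\mathcal H}V$); then for $h\in\mathcal H$, $k\in\mathcal K$, $\langle S^*h,k\rangle=\langle h,SP_{\mathcal H}k\rangle=\langle h,P_{\mathcal H}Tk\rangle=\langle T^*h,k\rangle$ gives $T^*|_{\mathcal H}=S^*$ directly, with no appeal to the one-variable theory or to minimality of $V$ alone. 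Your treatment of the converse (semi-invariance makes the compressions multiplicative) is fine and matches what the paper leaves as ``obvious,'' but the forward direction as you have written it has a genuine gap.
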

\begin{proof}
We first prove that $SP_{\mathcal H}=P_{\mathcal H}T$ and
$PP_{\mathcal H}=P_{\mathcal H}V$, where $P_{\mathcal H}:\mathcal
K \rightarrow \mathcal H$ is orthogonal projection onto $\mathcal
H$. Clearly
\[
\mathcal K=\overline{\textup{span}}\{ T^{m}V^n h\,:\; h\in\mathcal
H \textup{ and }m,n\in \mathbb N \cup \{0\} \}.
\]
Now for $h\in\mathcal H$ we have that
\begin{align*}
SP_{\mathcal H}(T^{m}V^n h)=S(S^{m}P^n h) &=S^{m+1}P^n h
\\&=P_{\mathcal H}(T^{m+1}V^n h)=P_{\mathcal H}T(T^{m}V^n h).
\end{align*}
Thus we have that $SP_{\mathcal H}=P_{\mathcal H}T$ and similarly
we can prove that
\[
PP_{\mathcal H}=P_{\mathcal H}V.
\]
Also for $h\in\mathcal H$ and
$k\in\mathcal K$ we have that
\begin{align*}
\langle S^*h,k \rangle =\langle P_{\mathcal H}S^*h,k
\rangle=\langle S^*h,P_{\mathcal H}k \rangle=\langle
h,SP_{\mathcal H}k \rangle &=\langle h,P_{\mathcal H}Tk \rangle
\\&=\langle T^*h,k \rangle .
\end{align*}
Hence $S^*=T^*|_{\mathcal H}$ and similarly $P^*=V^*|_{\mathcal
H}$. The converse part is obvious.
\end{proof}

Now we present the geometric construction of Sz.-Nagy dilation of
a $\Gamma$-contraction.

\begin{thm}\label{main-dilation-theorem}
 Let $(S,P)$ be a $\Gamma$-contraction defined on a Hilbert space $\mathcal H$. Let $F$ and $F_*$ be the
  fundamental operators of $(S,P)$ and its adjoint $(S^*,P^*)$ respectively. Let
 $\mathcal K_0=\cdots\oplus\mathcal D_P\oplus\mathcal D_P\oplus\mathcal D_P\oplus\mathcal H\oplus
 \mathcal D_{P^*}\oplus\mathcal D_{P^*}\oplus\mathcal D_{P^*}\oplus\cdots=l^2(\mathcal D_P)\oplus
 \mathcal H\oplus l^2(\mathcal D_{P^*})$. Consider the operator pair $(T_0,U_0)$ defined on $\mathcal K_0$ by
\begin{align*}
& T_0(\cdots,h_{-2},h_{-1},\underbrace{h_0},h_1,h_2,\cdots) \\&
=(\cdots,Fh_{-2}+F^*h_{-1},Fh_{-1}+F^*D_Ph_0-F^*P^*h_1,
\\& \quad \quad \underbrace{Sh_0+D_{P^*}
F_*h_1},F_*^*h_1+F_*h_2,F_*^*h_2+F_*h_3,\cdots)\\
& U_0(\cdots,h_{-2},h_{-1},\underbrace{h_0},h_1,h_2,\cdots) \\&
=(\cdots,h_{-2},h_{-1},D_Ph_0-P^*h_1,\underbrace{Ph_0+D_{P^*}h_1},h_2,h_3\cdots),
\end{align*}
where the $0$-th position of a vector in $\mathcal K_0$ has been indicated by an under brace. Then
$(T_0,U_0)$ is a minimal $\Gamma$-unitary dilation of $(S,P)$.
\end{thm}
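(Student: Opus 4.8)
\emph{The plan.} I would verify the three defining attributes of a minimal $\Gamma$-unitary dilation in turn --- that $(T_0,U_0)$ is a $\Gamma$-unitary, that it dilates $(S,P)$, and that it is minimal --- with essentially all the difficulty concentrated in the first. For the $\Gamma$-unitary property I would use the characterization in Theorem \ref{G-unitary}(3): a commuting pair $(T_0,U_0)$ is a $\Gamma$-unitary if and only if $U_0$ is unitary, $T_0=T_0^*U_0$, and $r(T_0)\le 2$. So the verification splits into four tasks: (a) $U_0$ is unitary; (b) $T_0U_0=U_0T_0$; (c) $T_0=T_0^*U_0$; (d) $r(T_0)\le 2$. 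Throughout I would use the standard contraction identities $D_P^2=I-P^*P$, $D_{P^*}^2=I-PP^*$, $PD_P=D_{P^*}P$, $D_PP^*=P^*D_{P^*}$, the density of $\textup{Ran}\,D_P$ in $\mathcal D_P$ and of $\textup{Ran}\,D_{P^*}$ in $\mathcal D_{P^*}$, the fundamental equations $S-S^*P=D_PFD_P$ and $S^*-SP^*=D_{P^*}F_*D_{P^*}$ with $\omega(F),\omega(F_*)\le 1$, and $SP=PS$.

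Task (a) is essentially known: $U_0$ involves only $P,D_P,D_{P^*}$ and is precisely the Sz.-Nagy minimal unitary dilation of the contraction $P$ written in the coordinates $l^2(\mathcal D_P)\oplus\mathcal H\oplus l^2(\mathcal D_{P^*})$, so $U_0^*U_0=U_0U_0^*=I$ is a direct block computation with the identities above. Tasks (b) and (c) are block-entrywise computations. For (b), away from the central summand $\mathcal H$ both $T_0$ and $U_0$ act by shifts with constant block-weights, which commute; hence $T_0U_0-U_0T_0$ is supported on a few blocks around $\mathcal H$, and its vanishing there is equivalent to the two fundamental equations together with the intertwining relations and $SP=PS$. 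For (c), one computes, for example, that the $(\mathcal H,\mathcal H)$-entry of $T_0^*U_0$ equals $D_PFD_P+S^*P$, so matching it with the $(\mathcal H,\mathcal H)$-entry $S$ of $T_0$ is exactly the defining equation of $F$; the remaining entries reduce similarly, using in addition the auxiliary identity $PFP^*=F_*^*PP^*$. I would prove this auxiliary identity by sandwiching both sides between $D_{P^*}$'s: using $PD_P=D_{P^*}P$, $D_PP^*=P^*D_{P^*}$, the fact that $PP^*=I-D_{P^*}^2$ commutes with $D_{P^*}$, and the two fundamental equations, both $D_{P^*}(PFP^*)D_{P^*}$ and $D_{P^*}(F_*^*PP^*)D_{P^*}$ collapse to $PSP^*-PS^*PP^*=SPP^*-PS^*PP^*$ (using $SP=PS$), and then one cancels the $D_{P^*}$'s exactly as in the uniqueness part of Theorem \ref{fundamentalop}.

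For task (d) I would exploit that the subspaces $l^2(\mathcal D_P)$ and $l^2(\mathcal D_P)\oplus\mathcal H$ are invariant under \emph{both} $T_0$ and $U_0$ --- immediate from the defining formulas --- so that $T_0$ is block upper triangular with respect to $\mathcal K_0=l^2(\mathcal D_P)\oplus\mathcal H\oplus l^2(\mathcal D_{P^*})$, with diagonal blocks $A_1$ on $l^2(\mathcal D_P)$, $S$ on $\mathcal H$, and $A_2$ on $l^2(\mathcal D_{P^*})$. After re-indexing, $A_1$ is the analytic Toeplitz operator $T_{F+F^*z}$ and $A_2$ is $T_{F_*+F_*^*z}^*$; since $\omega(F),\omega(F_*)\le 1$, the scalar-times-selfadjoint trick used in the proof of Theorem \ref{converse} gives $\|F+F^*e^{i\theta}\|\le 2$ and $\|F_*+F_*^*e^{i\theta}\|\le 2$, so $\|A_1\|,\|A_2\|\le 2$; together with $\|S\|\le 2$ and $\sigma(T_0)\subseteq\sigma(A_1)\cup\sigma(S)\cup\sigma(A_2)$ (valid for block-triangular operators), this yields $r(T_0)\le 2$. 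Granting (a)--(d), Theorem \ref{G-unitary} makes $(T_0,U_0)$ a $\Gamma$-unitary. The dilation property then comes for free from the same block-triangular structure: with central blocks $S$ and $P$, the product $T_0^mU_0^n$ has central block $S^mP^n$ (here $SP=PS$), and since $l^2(\mathcal D_P)\oplus\mathcal H$ is invariant this central block is exactly $P_{\mathcal H}T_0^mU_0^n|_{\mathcal H}$. Finally, minimality is inherited from $U_0$ alone: since $U_0$ is the minimal unitary dilation of $P$ --- one checks that positive powers of $U_0$ applied to $\mathcal H$ fill out $l^2(\mathcal D_P)$ and negative powers fill out $l^2(\mathcal D_{P^*})$ --- one already has $\overline{\textup{span}}\{U_0^nh:h\in\mathcal H,\ n\in\mathbb Z\}=\mathcal K_0$, a fortiori $\overline{\textup{span}}\{T_0^mU_0^nh\}=\mathcal K_0$.

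The main obstacle is tasks (b) and (c): they are lengthy block-matrix computations, and the content of the theorem is precisely that all the cancellations needed for $T_0U_0=U_0T_0$ and for $T_0=T_0^*U_0$ are forced by --- and only by --- the two fundamental operator equations together with the elementary contraction identities. Everything else (the spectral bound, the dilation property, the minimality) follows cleanly from the invariant-subspace structure once those two identities are in hand.
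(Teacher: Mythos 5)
Your proposal is correct and follows essentially the same route as the paper: verify that $U_0$ is the Sz.-Nagy minimal unitary dilation of $P$, reduce $T_0U_0=U_0T_0$ and $T_0=T_0^*U_0$ to block-entry identities that are established by sandwiching between defect operators and invoking the two fundamental equations together with $PD_P=D_{P^*}P$ (your auxiliary identity $PFP^*=F_*^*PP^*$ is exactly the paper's $PF=F_*^*P|_{\mathcal D_P}$ in the form needed for its condition $(b_3)$), bound $r(T_0)$ via the block-triangular spectral inclusion and the numerical-radius estimate $\|F+F^*e^{i\theta}\|\le 2$, and deduce the dilation property and minimality from the triangular structure and from $U_0$ being the minimal unitary dilation of $P$. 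No gaps; the only cosmetic difference is that the paper cites a lemma of Hong-Ke and Jin for the spectral inclusion, which you correctly note is elementary for block upper triangular operators.
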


 \begin{proof}
The matrices of $T_0$ with respect to the orthogonal
decompositions $l^2(\mathcal D_P)\oplus \mathcal H\oplus
l^2(\mathcal D_{P^*})$ and $\cdots\oplus\mathcal D_P\oplus\mathcal
D_P\oplus\mathcal D_P\oplus\mathcal H\oplus \mathcal
D_{P^*}\oplus\mathcal D_{P^*}\oplus\mathcal D_{P^*}\oplus\cdots$
of $\mathcal K_0$ and the matrix of $U_0$ with respect to the
decomposition $\cdots\oplus\mathcal D_P\oplus\mathcal
D_P\oplus\mathcal D_P\oplus\mathcal H\oplus \mathcal
D_{P^*}\oplus\mathcal D_{P^*}\oplus\mathcal D_{P^*}\oplus\cdots$
are the following:

\begin{eqnarray}
&T_0 = \left[
\begin{array}{ccc}
A_1 & A_2 & A_3\\
0 & S & A_4\\
0& 0& A_5  \end{array} \right] \notag \\& \label{2.3} =\left[
\begin{array}{ c c c c|c|c c c c}
\bm{\ddots}&\vdots &\vdots&\vdots   &\vdots  &\vdots& \vdots&\vdots&\vdots\\
\cdots&F&F^*&0  &0&  0&0&0&\cdots\\
\cdots&0&F&F^*  &0&  0&0&0&\cdots\\
\cdots&0&0&F  &F^*D_P&  -F^*P^*&0&0&\cdots\\ \hline

\cdots&0&0&0   &S&   D_{P^*}F_*&0&0&\cdots\\ \hline

\cdots&0&0&0   &0&  F_*^*& F_*&0&\cdots\\
\cdots&0&0&0   &0&  0&F_*^*&F_*&\cdots\\
\cdots&0&0&0  &0&   0& 0&F_*^*&\cdots\\
\vdots&\vdots&\vdots&\vdots&\vdots&\vdots&\vdots&\vdots& \bm{\ddots}\\
\end{array} \right],
\end{eqnarray}

\vspace{5mm}

\begin{equation}\label{2.4}
U_0 = \left[
\begin{array}{ c c c c|c|c c c c}
\bm{\ddots}&\vdots &\vdots&\vdots   &\vdots  &\vdots& \vdots&\vdots&\vdots\\
\cdots&0&I&0  &0&  0&0&0&\cdots\\
\cdots&0&0&I  &0&  0&0&0&\cdots\\
\cdots&0&0&0  &D_P&  -P^*&0&0&\cdots\\ \hline

\cdots&0&0&0   &P&   D_{P^*}&0&0&\cdots\\ \hline

\cdots&0&0&0   &0&  0& I&0&\cdots\\
\cdots&0&0&0   &0&  0&0&I&\cdots\\
\cdots&0&0&0  &0&   0& 0&0&\cdots\\
\vdots&\vdots&\vdots&\vdots&\vdots&\vdots&\vdots&\vdots&\bm{\ddots}\\
\end{array} \right].
\end{equation}

 \noindent The dilation space $\mathcal K_0$ is obviously the minimal unitary dilation space
 of the contraction $P$ and clearly the operator $U_0$ is the minimal unitary dilation of $P$.
 The space $\mathcal H$ can be embedded inside $\mathcal K_0$ by the canonical map
 $h \mapsto (\cdots,0,0,\underbrace{h},0,0,\cdots)$.
 The adjoint of $T_0$ and $U_0$ are defined in the following
way:
\begin{align*}
& T_0^*(\cdots,h_{-2},h_{-1},\underbrace{h_0},h_1,h_2,\cdots)\\&
=(\cdots,Fh_{-3}+F^*h_{-2},Fh_{-2}+F^*h_{-1},
\\& \quad \quad \underbrace{D_PFh_{-1}+S^*h_0},-PFh_{-1}+F_*^*D_{P^*}h_0+F_*h_1,F_*^*h_1+F_*h_2,\cdots)\\
& U_0^*(\cdots,h_{-2},h_{-1},\underbrace{h_0},h_1,h_2,\cdots)\\&
=(\cdots,h_{-3},h_{-2},\underbrace{D_Ph_{-1}+P^*h_0},-Ph_1,D_{P^*}h_0,h_1\cdots).
\end{align*}
To prove $(T_0,U_0)$ to be a minimal $\Gamma$-unitary dilation of
$(S,P)$ we have to show the following:
\begin{itemize}
\item[(i)]$(T_0,U_0)$ is a $\Gamma$-unitary \item[(ii)]
$(T_0,U_0)$ dilates $(S,P)$ \item[(iii)] the dilation $(T_0,U_0)$
is minimal.
\end{itemize}
For proving $(T_0,U_0)$ to be a $\Gamma$-unitary one needs to
verify, by virtue of Theorem \ref{G-unitary}-part (3), the
following:
\[
T_0U_0=U_0T_0,\, T_0=T_0^*U_0 \textup{ and } r(T_0)\leq 2.
\] Now
\begin{align*}
& T_0U_0(\cdots,h_{-2},h_{-1},\underbrace{h_0},h_1,h_2,\cdots) \\&
=T_0(\cdots,h_{-2},h_{-1},D_Ph_0-P^*h_1,\underbrace{Ph_0+D_{P^*}h_1},h_2,h_3\cdots)\\&
=(\cdots,Fh_{-1}+F^*D_Ph_0-F^*P^*h_1,\\& \qquad
(FD_P+F^*D_PP)h_0+(-FP^*+F^*D_PD_{P^*})h_1 -F^*P^*h_2,\\& \qquad
\underbrace{SPh_0+SD_{P^*}h_1+D_{P^*}F_*h_2},F_*^*h_2+F_*h_3,F_*^*h_3+F_*h_4,\cdots).
\end{align*}
Also
\begin{align*}
& U_0T_0(\cdots,h_{-2},h_{-1},\underbrace{h_0},h_1,h_2,\cdots)\\&
=U_0(\cdots,Fh_{-2}+F^*h_{-1},Fh_{-1}+F^*D_Ph_0-F^*P^*h_1,
\\& \quad \quad \underbrace{Sh_0+D_{P^*}
F_*h_1},F_*^*h_1+F_*h_2,F_*^*h_2+F_*h_3,\cdots)\\&
=(\cdots,Fh_{-1}+F^*D_Ph_0-F^*P^*h_1,D_PSh_0+(D_PD_{P^*}F_*-P^*F_*^*)h_1
\\& \qquad -P^*F_*h_2, \underbrace{PSh_0+(PD_{P^*}
F_*+D_{P^*}F_*^*)h_1+D_{P^*}F_*h_2},
\\& \qquad \qquad F_*^*h_2+F_*h_3,F_*^*h_3+F_*h_4,\cdots).
\end{align*}
In order to prove $T_0U_0=U_0T_0$ we have to prove the following
things:
\begin{itemize}
\item[($a_1$)]$D_PS=FD_P+F^*D_PP$, \item[($a_2$)]
$D_PD_{P^*}F_*-P^*F_*^*=-FP^*+F^*D_PD_{P^*}$, \item[($a_3$)]
$SD_{P^*}=D_{P^*}F_*^*+PD_{P^*}F_*$, \item[($a_4$)]
$F^*P^*=P^*F_*$.
\end{itemize}
($a_1$). Let $J=FD_P+F^*D_PP-D_PS$. Then $J$ is an operator from
$\mathcal H$ to $\mathcal D_P$. Since $F$ is the solution of
$S-S^*P=D_PXD_P$ we have that
\begin{align*}
D_PJ &= D_PFD_P+D_PF^*D_PP-D_P^2S \\& =
(S-S^*P)+(S^*-P^*S)P+(I-P^*P)S \\&=0.
\end{align*}
Clearly $\langle Jh,D_Ph_1 \rangle=\langle D_PJh,h_1 \rangle=0$
for all $h,h_1\in\mathcal H$ and hence $J=0$ which proves ($a_1$).

\noindent ($a_2$). It is enough to show that
$FP^*-P^*F_*^*=F^*D_PD_{P^*}-D_PD_{P^*}F_*$, where each side is
defined from $\mathcal D_{P^*}$ to $\mathcal D_P$.
\begin{align*}
& D_P(FP^*-P^*F_*^*)D_{P^*}\\&
=(D_PFD_P)P^*-P^*(D_{P^*}F_*^*D_{P^*}), \textup{ using the
relation } PD_P=D_{P^*}P\\& =(S-S^*P)P^*-P^*(S^*-SP^*)^*\\&
=SP^*-S^*PP^*-P^*S+P^*PS
\\&=(S^*-P^*S)(I-PP^*)-(I-P^*P)(S^*-SP^*)
\\&=(S^*-P^*S)D_{P^*}^2-D_{P}^2(S^*-SP^*)\\&=(D_PF^*D_P)D_{P^*}^2-D_P^2(D_{P^*}F_*D_{P^*}).
\end{align*}
For a proof of $PD_P=D_{P^*}P$ one can see chapter-I of
\cite{nazy}. Hence ($a_2$) is proved.\\

\noindent ($a_3$). Setting $J_1=D_{P^*}F_*^*+PD_{P^*}F_*-SD_{P^*}$
which maps $\mathcal D_{P^*}$ into $\mathcal H$ and using the same
argument as in the proof of ($a_1$), we can obtain $J_1D_{P^*}=0$
which proves ($a_3$).\\

\noindent ($a_4$). This follows from the fact that
$PF=F_*^*P|_{\mathcal D_P}$.\\

\textit{Proof of} $PF=F_*^*P|_{\mathcal D_P}$:\\
For $D_Ph \in
\mathcal D_P$ and $D_{P^*}h^{\prime}\in \mathcal D_{P^*}$, we have
that
\begin{align*}
\langle PFD_Ph,D_{P^*}h^{\prime} \rangle =\langle
D_{P^*}PFD_Ph,h^{\prime} \rangle &=\langle
PD_PFD_Ph,h^{\prime}\rangle\\
&=\langle P(S-S^*P)h,h^{\prime} \rangle \\&=\langle
(S-PS^*)Ph,h^{\prime} \rangle\\&=\langle
D_{P^*}{F_*}^*D_{P^*}Ph,h^{\prime} \rangle, \\& \text{since }
S^*-SP^*=D_{P^*}{F_*}D_{P^*}
\\&=\langle {F_*}^*PD_Ph,D_{P^*}h^{\prime} \rangle.
\end{align*}

Therefore $T_0U_0=U_0T_0$.\\

\noindent We now show that $T_0=T_0^*U_0$. We have that
\begin{align*}
& T_0^*U_0(\cdots,h_{-2},h_{-1},\underbrace{h_0},h_1,h_2,\cdots)
\\&
=T_0^*(\cdots,h_{-2},h_{-1},D_Ph_0-P^*h_1,\underbrace{Ph_0+D_{P^*}h_1},h_2,h_3\cdots)\\
&=( \cdots,Fh_{-2}+F^*h_{-1},Fh_{-1}+F^*D_Ph_0-F^*P^*h_1,
\\& \quad
\underbrace{(D_PFD_P+S^*P)h_0+(-D_PFP^*+S^*D_{P^*})h_1},\\& \quad
(-PFD_P+F_*^*D_{P^*}P)h_0+(PFP^*+F_*^*D_{P^*}^2)h_1+F_*h_{2},\\&
\quad F_*^*h_2+F_*h_3,\cdots ).
\end{align*}
Since $F$ is the fundamental operator of $(S,P)$ we have
$S=S^*P+D_PFD_P$. Therefore, in order to prove $T_0=T_0^*U_0$, we
need to show the following three steps:
\begin{itemize}
\item[($b_1$)]$D_{P^*}F_*=S^*D_{P^*}-D_PFP^*$ \item[($b_2$)]
$PFD_P=F_*^*D_{P^*}P$ \item[($b_3$)] $PFP^*+F_*^*D_{P^*}^2=F_*^*$.
\end{itemize}
For proving $(b_1)$ let us set $G=D_{P^*}F_*+D_PFP^*-S^*D_{P^*}$.
Obviously $G$ maps $\mathcal D_{P^*}$ into $\mathcal H$ and
\begin{align*}
GD_{P^*}&=D_{P^*}F_*D_{P^*}+D_PFP^*D_{P^*}-S^*D_{P^*}^2 \\&
=(S^*-SP^*)+(S-S^*P)P^*-S^*(I-PP^*), \text{ by } P^*D_{P^*}=D_PP^*
\\& =0,
\end{align*}
which proves $(b_1)$. The other two parts, $(b_2)$ and $(b_3)$,
follow from the facts that $PD_P =D_{P^*}P$ and
$PF=F_*^*P|_{\mathcal D_P}$.\\

In the matrix of $T_0$, $A_1$ on $l^2(\mathcal D_P)$ is same as
the multiplication operator $M_{F+F^*z}$ on $l^2(\mathcal D_P)$.
For $z=e^{i \theta}\in\mathbb{T}$ we have that
\begin{align*} \|F+F^*z\|&= \|F+e^{i \theta}F^*\|\\&=\|e^{-i
\theta/2}F+e^{i \theta/2}F^*\|\\&= \sup_{\|x\|\leq1}|\langle
(e^{-i \theta/2}F+e^{i \theta/2}F^*)x,x \rangle| \\& \qquad
\text{since } e^{-i \theta/2}F+e^{i \theta/2}F^* \text{ is
self-adjoint} \\& \leq \omega(F)+\omega(F^*) \\& \leq 2, \quad
[\textup{ since }\omega(F)\leq 1].
\end{align*} So by \textit{Maximum Modulus Principle}, $\|F+F^*z\|\leq2$
for all $z\in \overline{\mathbb{D}}$. Therefore
$\|A_1\|=\|M_{F+F^*z}\|=\|F+F^*z\|\leq2$. Similarly we can show
that $\|A_5\|\leq 2$. Also $\|S\|\leq 2$, because $(S,P)$ is a
$\Gamma$-contraction. Again by Lemma 1 of \cite{hong} we have that
$\sigma(T_0)\subseteq \sigma(A_1)\cup\sigma(S)\cup\sigma({A_5})$.
Therefore, $r(T_0)\leq 2$. Hence $(T_0,U_0)$ is a
$\Gamma$-unitary.

It is evident from the matrices of $T_0$ and $U_0$ that
$P_{\mathcal H}(T_0^mU_0^n)|_{\mathcal H}=S^mP^n$ for all
non-negative integers $m,n$ which proves that $(T_0,U_0)$ dilates
$(S,P)$. The minimality of the $\Gamma$-unitary dilation
$(T_0,U_0)$ follows from the fact that $\mathcal K_0$ and $U_0$
are respectively the minimal unitary dilation space and minimal
unitary dilation of $P$. Hence the proof is complete.
\end{proof}

An explicit $\Gamma$-isometric dilation of a $\Gamma$-contraction
was provided in \cite{tirtha-sourav} (see Theorem 4.3 in
\cite{tirtha-sourav}). Here we show that the $\Gamma$-isometric
dilation can easily be obtained as the restriction of the
$\Gamma$-unitary dilation described in the previous theorem.
\begin{cor}\label{gamma isometric dilation}
 Let $\mathcal N_0\subseteq \mathcal K_0$ be defined as $\mathcal N_0=\mathcal H\oplus l^2({\mathcal
 D_P})$. Then $\mathcal N_0$ is a common invariant subspace of $T_0,U_0$ and
 $(T^{\flat},V^{\flat})=(T_0|_{\mathcal N_0},U_0|_{\mathcal N_0})$
 is a minimal $\Gamma$-isometric dilation of $(S,P)$.
 \end{cor}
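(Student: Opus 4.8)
The plan is to verify the three defining properties of a $\Gamma$-isometric dilation for the restricted pair. First I would check that $\mathcal N_0 = \mathcal H \oplus l^2(\mathcal D_P)$ is invariant under both $T_0$ and $U_0$. Reading off the matrix representations \eqref{2.3} and \eqref{2.4}, the action of $T_0$ and $U_0$ on a vector supported on $\mathcal H$ together with the $\mathcal D_P$-components (the ``left tail'' and the central $\mathcal H$-slot) produces output whose $\mathcal D_{P^*}$-components vanish: indeed the only entries that could push mass into the $l^2(\mathcal D_{P^*})$ block are $D_{P^*}F_*$ and $D_{P^*}$ in the $\mathcal H$-row, which sit in the columns indexed by the \emph{first} $\mathcal D_{P^*}$-slot, not by $\mathcal H$ or any $\mathcal D_P$-slot. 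Hence $T_0\mathcal N_0 \subseteq \mathcal N_0$ and $U_0\mathcal N_0 \subseteq \mathcal N_0$, so $(T^\flat, V^\flat) = (T_0|_{\mathcal N_0}, U_0|_{\mathcal N_0})$ is a well-defined commuting pair (commutativity being inherited from $T_0 U_0 = U_0 T_0$).

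Next I would argue that $(T^\flat, V^\flat)$ is a $\Gamma$-isometry. The cleanest route is Proposition~\ref{easyprop3}: it suffices to show that the adjoint pair $((T^\flat)^*, (V^\flat)^*)$ is a $\Gamma$-co-isometric extension of $(S^*, P^*)$. Now $\mathcal N_0^\perp = l^2(\mathcal D_{P^*})$ inside $\mathcal K_0$, and since $(T_0, U_0)$ is a $\Gamma$-unitary, restricting $T_0^*, U_0^*$ to the co-invariant subspace $\mathcal N_0$ gives a $\Gamma$-co-isometry. Concretely, $(T^\flat)^* = P_{\mathcal N_0} T_0^*|_{\mathcal N_0}$ and $(V^\flat)^* = P_{\mathcal N_0} U_0^*|_{\mathcal N_0}$; since $\mathcal N_0$ is invariant under $T_0, U_0$, it is co-invariant under $T_0^*, U_0^*$, and the compression of a $\Gamma$-unitary to a co-invariant subspace is a $\Gamma$-co-isometry (its adjoint, which is the restriction to an invariant subspace, is a $\Gamma$-isometry by definition). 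Finally, because $\mathcal H \subseteq \mathcal N_0$ with $\mathcal H$ itself invariant under $T_0, U_0$ (as seen from the matrices, or from the embedding $h \mapsto (\dots,0,\underbrace{h},0,\dots)$ together with the dilation property), and $\mathcal H$ sits ``at the bottom'' of $\mathcal N_0$ in the sense that $T_0^* \mathcal H, U_0^* \mathcal H$ may leave $\mathcal H$ but stay in $\mathcal N_0$, one reads off directly from the formulas for $T_0^*, U_0^*$ that $(T^\flat)^*|_{\mathcal H} = S^*$ and $(V^\flat)^*|_{\mathcal H} = P^*$. Thus $((T^\flat)^*, (V^\flat)^*)$ is a $\Gamma$-co-isometric extension of $(S^*, P^*)$, and Proposition~\ref{easyprop3} yields that $(T^\flat, V^\flat)$ is a $\Gamma$-isometric dilation of $(S,P)$.

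For minimality, I would use that the $l^2(\mathcal D_P)$ part of $\mathcal N_0$ is exactly the minimal-isometric-dilation tail of $P$: the pair $(\mathcal N_0, V^\flat)$ is unitarily equivalent to the minimal isometric (Sch\"affer) dilation of the contraction $P$, which is generated by $\{(V^\flat)^n h : h \in \mathcal H, n \geq 0\}$. Since minimality of the $\Gamma$-isometric dilation only requires non-negative powers of $T^\flat, V^\flat$, and already the powers of $V^\flat$ alone span $\mathcal N_0$, we get $\mathcal N_0 = \overline{\operatorname{span}}\{ (T^\flat)^m (V^\flat)^n h : h \in \mathcal H,\ m,n \geq 0\}$ a fortiori. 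Hence the dilation is minimal.

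I expect the main obstacle to be the bookkeeping in the invariance check --- one must be careful about which matrix columns carry the ``boundary'' entries $D_{P^*}$, $D_{P^*}F_*$, $-P^*$, $-F^*P^*$ so as to confirm that no mass leaks from $\mathcal N_0$ into $l^2(\mathcal D_{P^*})$; and secondarily, one must make sure the extension claim $(T^\flat)^*|_{\mathcal H} = S^*$, $(V^\flat)^*|_{\mathcal H} = P^*$ is legitimate, i.e.\ that compressing to $\mathcal N_0$ does not disturb the action on the sub-subspace $\mathcal H$ --- but this is automatic because $\mathcal H \subseteq \mathcal N_0$ and $\mathcal N_0$ is $T_0$- and $U_0$-invariant, so $P_{\mathcal N_0}$ acts as the identity on the relevant outputs. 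Everything else is either a direct consequence of Theorem~\ref{main-dilation-theorem} or a routine appeal to Proposition~\ref{easyprop3}.
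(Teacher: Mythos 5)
Your overall route is the paper's: read the invariance of $\mathcal N_0$ off the matrices \eqref{2.3}--\eqref{2.4}, note that the restriction of the $\Gamma$-unitary $(T_0,U_0)$ to a joint invariant subspace is a $\Gamma$-isometry by definition, verify that the adjoint pair is a $\Gamma$-co-isometric extension of $(S^*,P^*)$, invoke Proposition~\ref{easyprop3}, and obtain minimality from the fact that $(\mathcal N_0,V^\flat)$ is the Sch\"affer minimal isometric dilation of $P$. The skeleton and the conclusion are fine, but two of the intermediate claims you use to justify the extension property $(T^\flat)^*|_{\mathcal H}=S^*$, $(V^\flat)^*|_{\mathcal H}=P^*$ are false. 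First, $\mathcal H$ is \emph{not} invariant under $T_0$ or $U_0$: for $h\in\mathcal H$ the vector $U_0h$ has the nonzero component $D_Ph$ in the last $\mathcal D_P$-slot, and $T_0h$ carries $F^*D_Ph$ there; the dilation identity $P_{\mathcal H}(T_0^mU_0^n)|_{\mathcal H}=S^mP^n$ does not give invariance. Second, $T_0^*\mathcal H$ and $U_0^*\mathcal H$ do \emph{not} stay inside $\mathcal N_0$: $T_0^*h$ has the component $F_*^*D_{P^*}h$ and $U_0^*h$ the component $D_{P^*}h$ in the first $\mathcal D_{P^*}$-slot, which is orthogonal to $\mathcal N_0$. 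So ``$P_{\mathcal N_0}$ acts as the identity on the relevant outputs'' is not true as stated.

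The repair is short and is what the paper actually does: with respect to $\mathcal H\oplus\mathcal D_P\oplus\mathcal D_P\oplus\cdots$ the matrices of $T^\flat$ and $V^\flat$ have first rows $(S,0,0,\dots)$ and $(P,0,0,\dots)$, so $l^2(\mathcal D_P)=\mathcal N_0\ominus\mathcal H$ is invariant under $T^\flat,V^\flat$; hence $\mathcal H$ is invariant under $(T^\flat)^*,(V^\flat)^*$ and the restrictions are $S^*,P^*$. (Equivalently, for $h\in\mathcal H$ and $x=h_0\oplus y\in\mathcal N_0$ one has $\langle (T^\flat)^*h,x\rangle=\langle h,T_0x\rangle=\langle S^*h,x\rangle$, because the $\mathcal H$-component of $T_0x$ is $Sh_0$ when $y\in l^2(\mathcal D_P)$.) A smaller slip of the same kind occurs in your invariance check: the entries that could leak mass out of $\mathcal N_0$ are those in the $l^2(\mathcal D_{P^*})$-\emph{rows} (the $F_*,F_*^*$ entries of the block $A_5$), not $D_{P^*}F_*$ and $D_{P^*}$, which sit in the $\mathcal H$-row; fortunately those rows vanish over the $\mathcal H$- and $\mathcal D_P$-columns, so the conclusion stands. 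Everything else --- the appeal to Proposition~\ref{easyprop3} and the minimality argument via $V^\flat$ being the minimal isometric dilation of $P$ --- matches the paper.
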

 \begin{proof}
It is evident from the matrix form of $T_0$ and $U_0$ (from the
previous theorem) that $\mathcal N_0=\mathcal H\oplus
l^2({\mathcal
 D_P})\mathcal= H\oplus \mathcal
D_P\oplus\mathcal D_P\oplus\cdots$ is a common invariant subspace
of $T_0$ and $U_0$. Therefore by the definition of
$\Gamma$-isometry, the restriction of $(T_0,U_0)$ to the common
invariant subspace $\mathcal N_0$, i.e. $(T^{\flat},V^{\flat})$ is
a $\Gamma$-isometry. The matrices of $T^{\flat}$ and $V^{\flat}$
with respect to the decomposition $\mathcal H\oplus \mathcal
D_P\oplus\mathcal D_P\oplus\cdots$ of $\mathcal N_0$ are the
following:
\[
T^{\flat}=\begin{bmatrix} S&0&0&0&\cdots\\F^*D_P&F&0&0&\cdots\\0&F^*&F&0&\cdots\\0&0&F^*&F&\cdots\\
\vdots&\vdots&\vdots&\vdots&\ddots \end{bmatrix}\;,\quad
V^{\flat}=\begin{bmatrix}
P&0&0&0&\cdots\\D_P&0&0&0&\cdots\\0&I&0&0&\cdots\\0&0&I&0&\cdots\\
\vdots&\vdots&\vdots&\vdots&\ddots \end{bmatrix}.
\]
It is obvious from the matrices of $T^{\flat}$ and $V^{\flat}$
that the adjoint of $(T^{\flat},V^{\flat})$ is a
$\Gamma$-co-isometric extension of $(S^*,P^*)$. Therefore by
Proposition \ref{easyprop3}, $(T^{\flat},V^{\flat})$ is a
$\Gamma$-isometric dilation of $(S,P)$. The minimality of this
$\Gamma$-isometric dilation follows from the fact that $\mathcal
N_0$ and $V^{\flat}$ are respectively the minimal isometric
dilation space and minimal isometric dilation of $P$. Hence the
proof is complete.
\end{proof}
\begin{rem}
The minimal $\Gamma$-unitary dilation $(T_0,U_0)$ described in
Theorem \ref{main-dilation-theorem} is the minimal
$\Gamma$-unitary extension of minimal $\Gamma$-isometric dilation
$(T^{\flat},V^{\flat})$ given in Corollary \ref{gamma isometric
dilation}. The reason is that if there is any $\Gamma$-unitary
extension $(T,U)$ of $(T^{\flat},V^{\flat})$ then $U$ is a unitary
extension of $V^{\flat}$ and $U_0$ is the minimal unitary
extension of $V^{\flat}$.
\end{rem}

\section{Functional Models}

Wold-decomposition breaks an isometry into two parts namely a
unitary and a pure isometry. A pure isometry $V$ is unitarily
equivalent to the Toeplitz operator $T_z$ on $H^2(\mathcal
D_{V^*})$. We have an analogous Wold-decomposition for
$\Gamma$-isometries in terms of a $\Gamma$-unitary and a pure
$\Gamma$-isometry (Theorem \ref{Gamma-isometry}, part-[2]). Again
Theorem \ref{G-unitary} tells us that every $\Gamma$-unitary is
nothing but the symmetrization of a pair of commuting unitaries.
Therefore a standard model for pure $\Gamma$-isometries gives a
complete picture of a $\Gamma$-isometry. In \cite{tirtha-sourav1},
a functional model for pure $\Gamma$-contractions has been
described. When in particular we are concerned about pure
$\Gamma$-isometries, it requires a much simpler effort to
establish the model.

\begin{thm}\label{model1}
Let $(\hat S,\hat P)$ be a commuting pair of operators on a
Hilbert space $\mathcal H$. If $(\hat S,\hat P)$ is a pure
$\Gamma$-isometry then there is a unitary operator $U:\mathcal H
\rightarrow H^2(\mathcal D_{{\hat P}^*})$ such that
\[
\hat S=U^*T_{\varphi}U, \text{ and } \hat P=U^*T_zU, \text{ where
} \varphi(z)= {\hat F}_*^*+{\hat F}_*z.
\]
Here ${\hat F}_*$ is the fundamental operator of $({\hat
S}^*,{\hat P}^*)$. Conversely, every such pair $(T_{A+A^*z},T_z)$
on $H^2(E)$ for some Hilbert space $E$ with $\omega(A)\leq 1$ is a
pure $\Gamma$-isometry.
\end{thm}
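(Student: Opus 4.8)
The plan is to prove the forward direction by combining the structure theorem for $\Gamma$-isometries with the converse-type construction already established in Theorem \ref{converse}. First I would use the hypothesis that $(\hat S,\hat P)$ is a pure $\Gamma$-isometry, so $\hat P$ is a pure isometry and by Theorem \ref{Gamma-isometry}, part (3), we have $\hat S = \hat S^* \hat P$ and $r(\hat S)\leq 2$. Since $\hat P$ is a pure isometry, the classical Wold decomposition gives a unitary $U:\mathcal H \to H^2(\mathcal D_{\hat P^*})$ with $\hat P = U^* T_z U$; this is the easy part. The work is to identify $U \hat S U^*$ as a Toeplitz operator $T_\varphi$ with the claimed symbol. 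To this end I would look at $(\hat S^*,\hat P^*)$, which is a pure $\Gamma$-co-isometry; let $\hat F_*$ be its fundamental operator, so by Theorem \ref{fundamentalop} it is the unique operator in $\mathcal L(\mathcal D_{\hat P^*})$ with $\hat S^* - \hat S \hat P^* = D_{\hat P^*} \hat F_* D_{\hat P^*}$, and $\omega(\hat F_*)\leq 1$.

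Next I would transport everything through $U$ to the model space $H^2(\mathcal D_{\hat P^*})$, identified with $l^2(\mathcal D_{\hat P^*})$, so that $\hat P$ becomes $M_z$ (the unilateral shift with block matrix having $I$'s on the subdiagonal) and $\hat S$ becomes some operator $R$ commuting with $M_z$ and satisfying $R = R^* M_z$. The equation $R = R^* M_z$ together with commutation with the shift forces $R$ to be an \emph{analytic} Toeplitz-type operator: writing $R$ in block form against the decomposition $l^2(\mathcal D_{\hat P^*})$, commutation with $M_z$ makes it lower-triangular Toeplitz (constant along diagonals), and the relation $R=R^*M_z$ kills all but the first two block-diagonals, giving $R = M_{\varphi}$ with $\varphi(z) = B^* + B z$ for some single operator $B$ on $\mathcal D_{\hat P^*}$ — exactly the shape appearing in Theorem \ref{converse}. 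Concretely, if the zeroth block of $R$ is $B^*$ and the first subdiagonal block is $B$, the condition $R = R^*M_z$ determines everything. It then remains to identify $B$ with $\hat F_*$. For this I would compute the fundamental operator of $(R^*, M_z^*) = (\hat S^*, \hat P^*)$ directly from the block matrices exactly as in the calculation in Theorem \ref{fundamentalop} and Theorem \ref{converse}: one finds $R^* - R M_z^* = D_{M_z^*} F_1 D_{M_z^*}$ where $F_1$ is the compression to $\mathcal D_{M_z^*} \equiv \mathcal D_{\hat P^*}$ of the operator with zeroth block equal to $B$, i.e. $F_1 \cong B$. By uniqueness of the fundamental operator (Theorem \ref{fundamentalop}) and Proposition \ref{end-prop}, $B = \hat F_*$, hence $\varphi(z) = \hat F_*^* + \hat F_* z$ as claimed.

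For the converse I would invoke Theorem \ref{converse} almost verbatim: given $E$ and $A$ with $\omega(A)\leq 1$, the proof of that theorem already shows $(T_{A^*+Az}, T_z)$ is a $\Gamma$-isometry (via part (3) of Theorem \ref{Gamma-isometry}, using $\|A^* + Ae^{2i\theta}\| = \omega(e^{-i\theta}A^* + e^{i\theta}A) \leq 2$), and since $T_z$ on $H^2(E)$ is a pure isometry, this $\Gamma$-isometry is pure. The only cosmetic point is matching $T_{A+A^*z}$ in the statement with $T_{A^*+Az}$ in Theorem \ref{converse}; relabelling $A \leftrightarrow A^*$ (both have the same numerical radius) handles this, so no new argument is needed.

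The main obstacle I anticipate is the middle step: rigorously showing that an operator $R$ on $l^2(\mathcal D_{\hat P^*})$ that commutes with the shift $M_z$ and satisfies $R = R^* M_z$ must be of the banded form $M_{B^*+Bz}$. The commutation part is standard (it makes $R$ a lower-triangular Toeplitz operator, equivalently an analytic symbol $\sum_{n\geq 0} B_n z^n$ acting boundedly), but one must then carefully argue that $R = R^* M_z$ forces $B_n = 0$ for $n \geq 2$ and pins down the relation between $B_0$ and $B_1$; keeping track of adjoints of formal Toeplitz symbols and the shift in block-matrix form is where the bookkeeping is delicate. Everything else reduces to results already in hand: the Wold decomposition, Theorems \ref{Gamma-isometry}, \ref{fundamentalop}, \ref{converse}, and Proposition \ref{end-prop}.
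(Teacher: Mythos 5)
Your proposal is correct and follows essentially the same route as the paper: identify $\hat P$ with $T_z$ on $H^2(\mathcal D_{\hat P^*})$, use the commutant of the shift to write $\hat S$ as an analytic Toeplitz operator, use the $\Gamma$-isometry relation $T_\varphi=T_\varphi^*T_z$ to force the symbol into the form $A+A^*z$, and then identify $A^*$ with $\hat F_*$ via the computation and uniqueness argument from Theorem \ref{converse}, with the converse also delegated to Theorem \ref{converse}. The block-matrix bookkeeping you flag as the delicate middle step is exactly the point the paper passes over quickly, and your Fourier-coefficient/banded-matrix argument fills it in correctly.
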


\begin{proof}
First let us suppose that $(\hat S,\hat P)$ is a pure
$\Gamma$-isometry. Then $\hat P$ is a pure isometry and can be
identified with $T_z$ on $H^2(\mathcal D_{{\hat P}^*})$.
Therefore, there is a unitary $U$ from $\mathcal H$ onto
$H^2(\mathcal D_{{\hat P}^*})$ such that $\hat P=U^*T_zU$. Since
$\hat S$ is a commutant of $\hat P$, there exists $\varphi\in
H^{\infty}(\mathcal L(D_{{\hat P}^*}))$ such that
$T=U^*T_{\varphi}U$. As $(T_{\varphi},T_z)$ is a
$\Gamma$-isometry, by the relation $T_{\varphi}=T_{\varphi}^*T_z$
(see Theorem \ref{Gamma-isometry}), we have that
\[
\varphi(z)=A+A^*z,\; \textup{ for some } A\in\mathcal L(\mathcal
D_{V^*}).
\]
Also $\|T_{\varphi}\|=\|\varphi\|_{\infty}\leq 2$. Therefore, for
any real $\theta$,
\[
\|A+A^*e^{i\theta}\| = \|Ae^{-i\theta /2}+A^*e^{i\theta
/2}\|=\|2\textup{Re}(e^{-i\theta /2}A)\|\leq 2.
\]
Therefore,
$\omega(A)\leq 1$ by Lemma \ref{basicnrlemma}. It is evident from
the proof of Theorem \ref{fundamentalop} that if
$(T_{A+A^*z},T_z)$ is a $\Gamma$-isometry then $A^*$ is the
fundamental operator of the $\Gamma$-co-isometry
$(T_{A+A^*z}^*,T_z^*)$. Denoting by ${\hat F}_*$, the fundamental
operator of $({\hat S}^*,{\hat P}^*)$, we have that $\hat
S=U^*T_{{\hat F}_*^*+{\hat F}_*z}U$.

The proof to the converse is simple. The fact that
$(T_{A+A^*z},T_z)$ on $H^2(E)$ is a $\Gamma$-isometry, when
$\omega(A)\leq 1$, follows from Theorem \ref{Gamma-isometry},
Part-(3). Moreover, since $T_z$ is pure isometry,
$(T_{A+A^*z},T_z)$ is a pure $\Gamma$-isometry.

\end{proof}

The following result of one variable dilation theory is necessary
for the proof of the model theorem for a $\Gamma$-contraction. We
present a proof of it due to lack of a good reference.
\begin{prop}\label{easyprop1}
If $T$ is a contraction and $V$ is its minimal isometric dilation
then $T^*$ and $V^*$ have defect spaces of same dimension.
\end{prop}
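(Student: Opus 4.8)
The plan is to use the standard theory of minimal isometric dilations, which realizes $V$ on $\mathcal{K}=\mathcal{H}\oplus H^2(\mathcal{D}_T)$ in triangular form
\[
V=\begin{bmatrix} T & 0 \\ \widehat{D}_T & T_z \end{bmatrix},
\]
where $\widehat{D}_T h=(D_Th,0,0,\dots)$ and $T_z$ is the shift on $H^2(\mathcal{D}_T)$. First I would compute the defect operator $D_{V^*}^2=I_{\mathcal{K}}-VV^*$ explicitly from this block form. Writing $VV^*$ in block form and subtracting from the identity, the $\mathcal{H}$-component gives $I_{\mathcal{H}}-TT^*-\widehat{D}_T^*\widehat{D}_T=I_{\mathcal{H}}-TT^*-D_T^2$; the cross terms involve $-\widehat{D}_TT^*$ and its adjoint; and the $H^2(\mathcal{D}_T)$-component gives $I-T_zT_z^*$, which is the rank-$(\dim\mathcal{D}_T)$ projection onto the first copy of $\mathcal{D}_T$. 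The goal is to identify $\overline{\operatorname{Ran}}\,D_{V^*}$ and show it has the same dimension as $\mathcal{D}_{T^*}=\overline{\operatorname{Ran}}\,D_{T^*}$.

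The key structural fact to exploit is that $V$ is a \emph{minimal} isometric dilation, so $\mathcal{K}=\overline{\operatorname{span}}\{V^n h:h\in\mathcal{H},\ n\geq 0\}$. From this one deduces that $\mathcal{K}\ominus V\mathcal{K}$, which is precisely $\ker V^*$, is contained in $\mathcal{H}$: indeed a vector orthogonal to $V\mathcal{K}\supseteq\mathcal{H}$ (since $T\mathcal{H}\subseteq\mathcal{H}$ would not suffice, but $V\mathcal{H}$ together with $V^n\mathcal{H}$ spans) — more carefully, $\ker V^* = \mathcal K \ominus V\mathcal K$, and by minimality together with $V$ being a dilation one checks $V\mathcal K \supseteq \mathcal K \ominus \mathcal H$, forcing $\ker V^*\subseteq\mathcal H$. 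Then for $h\in\mathcal{H}$, $V^*h=T^*h$ (the standard co-extension property $P_{\mathcal H}V^*|_{\mathcal H}=T^*$ sharpened by the triangular form: $V^*\begin{bmatrix}h\\0\end{bmatrix}=\begin{bmatrix}T^*h\\0\end{bmatrix}$), so $h\in\ker V^*$ iff $h\in\ker T^*$. Hence $\ker V^*=\ker T^*$ as subspaces of $\mathcal{H}\subseteq\mathcal{K}$. Since for any bounded operator the kernel of the adjoint is the orthogonal complement of the range, $\overline{\operatorname{Ran}}\,V$ has codimension $\dim\ker T^*$ in $\mathcal{K}$, and likewise $\overline{\operatorname{Ran}}\,T$ has codimension $\dim\ker T^*$ in $\mathcal{H}$. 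But $\overline{\operatorname{Ran}}\,D_{T^*}=\overline{\operatorname{Ran}}\,D_{T^*}^2 = \overline{\operatorname{Ran}}(I-TT^*)$, and $\ker(I-TT^*)=\ker D_{T^*}$; similarly $\ker D_{V^*}=\ker(I-VV^*)$. So it remains to relate $\ker D_{T^*}$ and $\ker D_{V^*}$ — but $\dim\mathcal D_{T^*}=\dim\overline{\operatorname{Ran}}\,D_{T^*}^2=\operatorname{codim}\ker D_{T^*}$ is not what we want directly; instead I would argue $\dim\mathcal D_{T^*}=\dim\ker T^*$ is false in general (e.g. $T$ a non-surjective isometry), so I must be more careful: the correct identity is $\overline{\operatorname{Ran}}\,D_{T^*}\ominus\ker$ considerations, i.e. $\dim\mathcal D_{T^*}$ equals $\dim$ of the closure of $\operatorname{Ran}(I-TT^*)$, which equals $\operatorname{codim}\ker D_{T^*}$, and $\ker D_{T^*}=\ker(I-TT^*)$ consists of vectors with $\|T^*h\|=\|h\|$.

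Given this, the cleanest route — and the one I would actually write — is: show $\ker D_{V^*}$ is isometrically identified with $\ker D_{T^*}$ via the inclusion $\mathcal H\hookrightarrow\mathcal K$, and that $\ker D_{V^*}\subseteq\mathcal H$. The first claim: if $k=h\oplus f\in\mathcal K$ with $\|V^*k\|=\|k\|$, use the block form of $V^*$ and minimality to force $f=0$ (this is where minimality of the dilation is essential — a non-minimal isometric dilation can enlarge the defect space); then $\|T^*h\|=\|h\|$, i.e. $h\in\ker D_{T^*}$. Conversely any $h\in\ker D_{T^*}\subseteq\mathcal H$ satisfies $V^*(h\oplus 0)=T^*h\oplus 0$ with $\|T^*h\|=\|h\|$, so $h\oplus 0\in\ker D_{V^*}$. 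Thus $\ker D_{V^*}=\ker D_{T^*}$ (under the embedding), hence $\operatorname{codim}\ker D_{V^*}$ in $\mathcal K$ — wait, these have different ambient spaces. The fix: $\dim\mathcal D_{T^*}=\dim\mathcal H\ominus\ker D_{T^*}$ only when $\mathcal H$ is finite dimensional; in general use that $D_{T^*}$ is injective on $\mathcal H\ominus\ker D_{T^*}$ with dense range there, so $\dim\mathcal D_{T^*}=\dim(\mathcal H\ominus\ker D_{T^*})$, and identically $\dim\mathcal D_{V^*}=\dim(\mathcal K\ominus\ker D_{V^*})$. The main obstacle is therefore to prove $\mathcal K\ominus\ker D_{V^*}$ and $\mathcal H\ominus\ker D_{T^*}$ have the same dimension; since $\ker D_{V^*}=\ker D_{T^*}\subseteq\mathcal H$, I have $\mathcal K\ominus\ker D_{V^*}=(\mathcal H\ominus\ker D_{T^*})\oplus(\mathcal K\ominus\mathcal H)=(\mathcal H\ominus\ker D_{T^*})\oplus H^2(\mathcal D_T)$, which has strictly larger dimension in general — so this approach as stated is wrong, and the genuine content must instead come from directly computing $D_{V^*}^2$ in the block form and showing its range closure is isomorphic to $\mathcal D_{T^*}$ via an explicit map built from the $(1,1)$ and cross entries $I-TT^*-D_T^2=$ (something) and $-D_TT^*$. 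I would set up the $2\times 2$ block operator $I-VV^*$, observe its $H^2$-corner contribution is supported only on the first $\mathcal D_T$ summand, reduce to a $2\times 2$ operator matrix on $\mathcal H\oplus\mathcal D_T$, and compute that its range closure has dimension $\dim\mathcal D_{T^*}$ by exhibiting an explicit unitary from it onto $\mathcal D_{T^*}$; that explicit identification is the technical heart of the argument and the step I expect to consume most of the work.
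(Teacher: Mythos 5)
Your proposal does not arrive at a proof: after discarding two routes (via $\ker V^*=\ker T^*$ and via $\ker D_{V^*}=\ker D_{T^*}$, both of which you correctly recognize cannot control the defect dimensions), you end by announcing that the real content is to exhibit an explicit unitary from $\overline{\operatorname{Ran}}(I-VV^*)$ onto $\mathcal D_{T^*}$ and that this is ``the step I expect to consume most of the work'' --- but that step \emph{is} the proposition, and you never carry it out. The two observations that make it a short argument are absent. First, since $V$ is an isometry, $(I-VV^*)V^nh=0$ for $n\ge 1$ and $h\in\mathcal H$, hence $(I-VV^*)p(V)h=p(0)(I-VV^*)h$ for every polynomial $p$; minimality then gives $\mathcal D_{V^*}=\overline{\operatorname{Ran}}(I-VV^*)=\overline{(I-VV^*)\mathcal H}$, i.e.\ the defect space of $V^*$ is already generated by vectors from $\mathcal H$. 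Second, because $V^*h=T^*h$ for $h\in\mathcal H$, one has $\langle(I-VV^*)h,h\rangle=\|h\|^2-\|T^*h\|^2=\|D_{T^*}h\|^2$, so the map $L\colon D_{T^*}h\mapsto(I-VV^*)^{1/2}h$ is a well-defined isometry from a dense subspace of $\mathcal D_{T^*}$ onto a dense subspace of $\mathcal D_{V^*}$, and therefore extends to a unitary. That is the paper's proof; nothing in your write-up substitutes for it.

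Two of your intermediate assertions are moreover false and would derail the block computation you propose to fall back on. The $(1,1)$ entry of $I-VV^*$ is $I-TT^*=D_{T^*}^2$, not $I-TT^*-D_T^2$: the term $\widehat D_T^*\widehat D_T$ occurs in $V^*V$ (which equals $I$), not in $VV^*$. And $\ker V^*\subseteq\mathcal H$ fails in general: one computes $\ker V^*=\{(k,g_0):T^*k+D_Tg_0=0,\ g_0\in\mathcal D_T\}$, so already for $T=1/2$ on $\mathbb C$ the kernel is spanned by a vector with nonzero component in the first copy of $\mathcal D_T$. Your supporting claim that $V\mathcal K\supseteq\mathcal K\ominus\mathcal H$ is likewise false whenever $T$ is injective with $D_T\ne 0$.
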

\begin{proof}
Let $T$ and $V$ be defined on $\mathcal H$ and $\mathcal K$. Since
$V$ is the minimal isometric dilation of $T$ we have
\[
\mathcal K=\overline{\text{span}} \{p(V)h:\; h\in\mathcal H \text{
and }p \text{ is any polynomial in one variable }\}.
\]
The defect spaces of $T^*$ and $V^*$ are respectively $\mathcal
D_{T^*}=\overline{\textup{Ran}}\;(I-TT^*)^{\frac{1}{2}}$ and
$D_{V^*}=\overline{\textup{Ran}}\; (I-VV^*)^{\frac{1}{2}}$. Let
$\mathcal N= \overline{\textup{Ran}}\;
(I-VV^*)^{\frac{1}{2}}|_{\mathcal H}$. For $h\in\mathcal H$ and
$n\geq 1$, we have
\[
(I-VV^*)V^nh=V^nh-VV^*V^nh=0, \text{ as } V \text{ is an
isometry}.
\]
Therefore, $(I-VV^*)p(V)h=p(0)(I-VV^*)h$ for any
polynomial $p$ in one variable. So $(I-VV^*)k\in\mathcal N$ for
any $k\in\mathcal K$. This shows that
$\overline{\textup{Ran}}(I-VV^*)\subseteq \mathcal N$ and
hence $\overline{\textup{Ran}}(I-VV^*)=\mathcal D_{V^*}=\mathcal N$.\\

We now define for $h\in\mathcal H$,
\begin{align*}
L:\; & \text{Ran}(I-TT^*)^{\frac{1}{2}}\,\rightarrow\,
\text{Ran}(I-VV^*)^{\frac{1}{2}}
\\& (I-TT^*)^{\frac{1}{2}}h \mapsto
(I-VV^*)^{\frac{1}{2}}h.
\end{align*}
We prove that $L$ is an
isometry. Since $V^*$ is co-isometric extension of $T^*$,
$TT^*=P_{\mathcal H}VV^*|_{\mathcal H}$ and thus we have $
(I_{\mathcal H}-TT^*)=P_{\mathcal H}(I_{\mathcal
K}-VV^*)|_{\mathcal H}$, that is, $D_{P^*}^2=P_{\mathcal
H}D_{V^*}^2|_{\mathcal H}.$ Therefore, for $h\in\mathcal H$,
\begin{align*}
\|D_{T^*}h\|^2=\langle D_{P^*}^2h,h \rangle=\langle P_{\mathcal
H}D_{V^*}^2h,h \rangle =\langle D_{V^*}^2h,h
\rangle=\|D_{V^*}h\|^2,
\end{align*}
and $L$ is an isometry and this can clearly be extended to a
unitary from $\mathcal D_{T^*}$ to $\mathcal D_{V^*}$. Hence
proved.
\end{proof}

The following is the model theorem of a $\Gamma$-contraction and
is another main result of this section. This can be treated as a
concrete form of the model given by Agler and Young (Theorem
\ref{model}) in the sense that we have specified the model space
and model operators.

\begin{thm}\label{model2}
 Let $(S,P)$ be a $\Gamma$-contraction on a Hilbert space $\mathcal H$.
 Let $(T,V)$ on $\mathcal K_*=\mathcal H\oplus \mathcal D_{P^*}\oplus\mathcal D_{P^*}\oplus
 \cdots$ be defined as
 \[
 T=\begin{bmatrix} S&D_{P^*}F_*&0&0&\cdots\\ 0&F_*^*&F_*&0&\cdots\\
 0&0&F_*^*&F_*&\cdots \\ 0&0&0&F_*^*&\cdots\\ \vdots&\vdots&\vdots&\vdots&\ddots  \end{bmatrix}
 \text{ and }
V=\begin{bmatrix} P&D_{P^*}&0&0&\cdots\\0&0&I&0&\cdots\\0&0&0&I&\cdots \\
 0&0&0&0&\cdots\\\vdots&\vdots&\vdots&\vdots&\ddots \end{bmatrix},
 \]
 where $F_*$ is the fundamental operator of $(S^*,P^*)$. Then
 \begin{enumerate}
 \item [(1)] $(T,V)$ is a $\Gamma$-co-isometry, $\mathcal H$ is a
 common invariant subspace of $T,V$ and $T|_{\mathcal H}=S$ and
 $V|_{\mathcal H}=P$; \item [(2)] there is an orthogonal decomposition $\mathcal K_*=\mathcal K_1\oplus \mathcal K_2$
 into reducing subspaces of $T$ and $V$ such that $(T|_{\mathcal K_1},V|_{\mathcal K_1})$ is a
 $\Gamma$-unitary and $(T|_{\mathcal K_2},V|_{\mathcal K_2})$ is a
 pure $\Gamma$-co-isometry; \item [(3)] $\mathcal K_2$ can be
 identified with $H^2(\mathcal D_V)$, where $D_V$ has same dimension
 as that of $\mathcal D_P$. The operators $T|_{\mathcal K_2}$ and $V|_{\mathcal
 K_2}$ are respectively unitarily equivalent to $T_{B+B^*\bar z}$ and $T_{\bar
 z}$ defined on $H^2(\mathcal D_V)$, $B$ being the fundamental
 operator of $(T,V)$.
 \end{enumerate}
 \end{thm}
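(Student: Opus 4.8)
The plan is to verify the three assertions in sequence, leveraging the structure theorems from Section 2 together with the fundamental-operator machinery of Section 3.

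\medskip

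\noindent\textbf{Step 1: $(T,V)$ is a $\Gamma$-co-isometry extending $(S,P)$.}
I would first observe that the matrices displayed for $T$ and $V$ are precisely the adjoints of the operators $T^{\flat}, V^{\flat}$ appearing in Corollary \ref{gamma isometric dilation}, but written for the $\Gamma$-contraction $(S^*,P^*)$ in place of $(S,P)$. More precisely, $(T^*,V^*)$ is exactly the minimal $\Gamma$-isometric dilation of $(S^*,P^*)$ from Corollary \ref{gamma isometric dilation} (with $\mathcal N_0 = \mathcal H \oplus l^2(\mathcal D_{P^*})$, and the entries $F_*^*, F_*$ in place of $F, F^*$, since the fundamental operator of $(S^*,P^*)$ is $F_*$ and that of its adjoint $(S,P)$ is $F$). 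Hence $(T^*,V^*)$ is a $\Gamma$-isometry, so $(T,V)$ is a $\Gamma$-co-isometry by definition. That $\mathcal H = \mathcal H \oplus \{0\} \oplus \cdots$ is invariant under both $T$ and $V$ is immediate from the upper-triangular-by-first-column structure of the matrices, and reading off the $(1,1)$ entries gives $T|_{\mathcal H} = S$, $V|_{\mathcal H} = P$. Strictly, one should double-check commutativity $TV = VT$ directly from the matrix products, which reduces to the identities $PD_{P^*} = D_{P^*}\cdot(\text{shift-type data})$ and $PF = F_*^*P|_{\mathcal D_P}$ type relations already established in the proof of Theorem \ref{main-dilation-theorem}; alternatively it follows since $(T^*,V^*)$ is a $\Gamma$-isometry.

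\medskip

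\noindent\textbf{Step 2: the reducing decomposition (Wold-type).}
Apply Theorem \ref{Gamma-isometry}, part (2), to the $\Gamma$-isometry $(T^*,V^*)$: the Wold decomposition of the isometry $V^*$ splits $\mathcal K_*$ into $\mathcal K_1 \oplus \mathcal K_2$ reducing both $V^*$ and $T^*$, with $(T^*|_{\mathcal K_1}, V^*|_{\mathcal K_1})$ a $\Gamma$-unitary and $(T^*|_{\mathcal K_2}, V^*|_{\mathcal K_2})$ a pure $\Gamma$-isometry. Taking adjoints, $(T|_{\mathcal K_1}, V|_{\mathcal K_1})$ is a $\Gamma$-unitary (a $\Gamma$-unitary is self-adjoint-stable by Theorem \ref{G-unitary}) and $(T|_{\mathcal K_2}, V|_{\mathcal K_2})$ is a pure $\Gamma$-co-isometry. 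Here $\mathcal K_1$ is the unitary part of $V^*$ and $\mathcal K_2 = \bigoplus_{n \ge 0} (V^*{}^n \mathcal D_{V^*})$ is its pure-shift part.

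\medskip

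\noindent\textbf{Step 3: identification of the pure part with a Toeplitz pair.}
Since $(T^*|_{\mathcal K_2}, V^*|_{\mathcal K_2})$ is a pure $\Gamma$-isometry, Theorem \ref{model1} gives a unitary identifying it with $(T_{\varphi}, T_z)$ on $H^2(\mathcal D_{(V^*)^*}) = H^2(\mathcal D_V)$, where $\varphi(z) = B^* + Bz$ and $B$ is the fundamental operator of the adjoint pair, i.e. of $(T|_{\mathcal K_2}, V|_{\mathcal K_2})$ — and, since $\mathcal K_1$ is $\Gamma$-unitary so has zero fundamental operator (by the Remark after Theorem \ref{fundamentalop}), $B$ coincides with the fundamental operator of $(T,V)$ itself. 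Passing to adjoints, $T|_{\mathcal K_2} \cong T_{\varphi}^* = T_{\bar\varphi} = T_{B + B^*\bar z}$ and $V|_{\mathcal K_2} \cong T_z^* = T_{\bar z}$ on $H^2(\mathcal D_V)$, as claimed. Finally, the dimension count $\dim \mathcal D_V = \dim \mathcal D_P$ follows from Proposition \ref{easyprop1}: one checks that $V^*$, restricted appropriately, is (unitarily equivalent to) the minimal isometric dilation of the contraction $P^*$ — indeed $\mathcal K_* = \mathcal H \oplus l^2(\mathcal D_{P^*})$ is the minimal isometric dilation space of $P^*$ and the $(2,1),(2,2),\ldots$ structure of $V$ shows $V^*$ is that minimal isometric dilation — so by Proposition \ref{easyprop1} the defect space of $(V^*)^* = V$ has the same dimension as that of $(P^*)^* = P$, namely $\dim \mathcal D_P$.

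\medskip

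\noindent The main obstacle I anticipate is \emph{bookkeeping rather than depth}: one must be careful about which pair plays the role of "$(S,P)$" versus "its adjoint" throughout, so that the fundamental operator appearing in the final Toeplitz symbol is correctly identified as that of $(T,V)$ and matches $F_*$-data coherently; and one must justify cleanly that $V^*$ is genuinely the \emph{minimal} isometric dilation of $P^*$ (not merely some isometric dilation) in order to invoke Proposition \ref{easyprop1} for the dimension equality. Everything else is a transcription of Theorems \ref{G-unitary}, \ref{Gamma-isometry}, \ref{model1} and Corollary \ref{gamma isometric dilation}.
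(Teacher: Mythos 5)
Your proposal is correct and follows essentially the same route as the paper: recognize $(T^*,V^*)$ as the minimal $\Gamma$-isometric dilation of $(S^*,P^*)$ from Corollary \ref{gamma isometric dilation}, apply the Wold-type decomposition of Theorem \ref{Gamma-isometry} to $(T^*,V^*)$, identify the pure part via Theorem \ref{model1} after noting that the $\Gamma$-unitary summand contributes nothing to the defect space (so the fundamental operator of the pure part equals that of $(T,V)$), and get the dimension equality from Proposition \ref{easyprop1}. The only cosmetic difference is that you read the extension property off the matrices directly rather than citing Proposition \ref{easyprop3}; both are fine.
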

 \begin{proof}
 It is evident from Corollary \ref{gamma isometric dilation} that
 $(T^*,V^*)$ is minimal $\Gamma$-isometric dilation of $(S^*,P^*)$,
 where $V^*$ is the minimal isometric dilation of $P^*$.
 Therefore by Proposition \ref{easyprop3}, $(T,V)$ is
 $\Gamma$-co-isometric extension of $(S,P)$. So we have that $\mathcal H$ is a
 common invariant subspace of $T$ and $V$ and $T|_{\mathcal H}=S,\; V|_{\mathcal H}=P$.
 Again since $(T^*,V^*)$ is a $\Gamma$-isometry, by Theorem \ref{Gamma-isometry} part-(2),
 there is an orthogonal decomposition $\mathcal K_*=\mathcal K_1\oplus\mathcal K_2$ into
 reducing subspaces of $T$ and $V$ such that $(T|_{\mathcal K_1},V|_{\mathcal K_1})$ is a
 $\Gamma$-unitary and $(T|_{\mathcal K_2},V|_{\mathcal
 K_2})$ is a pure $\Gamma$-co-isometry. If we denote $(T|_{\mathcal K_1},V|_{\mathcal
 K_1})$ by $(T_1,V_1)$ and $(T|_{\mathcal K_2},V|_{\mathcal K_2})$
 by $(T_2,V_2)$ then with respect to the orthogonal decomposition $\mathcal K_*=\mathcal K_1\oplus \mathcal K_2$
 we have
 \[
 T=\begin{bmatrix}T_1&0\\0&T_2 \end{bmatrix}\;,\quad V=\begin{bmatrix}V_1&0\\0&V_2 \end{bmatrix}.
 \]
 The fundamental equation $T-T^*V=D_VXD_V$ clearly becomes
 \[
 \begin{bmatrix}T_1-T_1^*V_1 &0\\0&T_2-T_2^*V_2 \end{bmatrix}
 =\begin{bmatrix}0&0\\0& D_{V_2}X_2D_{V_2} \end{bmatrix},\quad X=\begin{bmatrix} X_1\\X_2\end{bmatrix}.
 \]
 Since $\mathcal D_V=\mathcal D_{V_2}$, the above form of the
 fundamental equation shows that $(T,V)$ and $(T_2,V_2)$ have the
 same fundamental operator. Now we apply Theorem \ref{model1} to
 the pure $\Gamma$-isometry $(T_2^*,V_2^*)=(T^*|_{\mathcal K_2},V^*|_{\mathcal
 K_2})$ and get the following:
 \begin{enumerate}
 \item[(i)] $\mathcal K_2$ can be identified with $H^2(\mathcal D_{V_2})=H^2(\mathcal
 D_V)$; \item[(ii)] $T_2^*$ and $V_2^*$ can be identified with the
 Toeplitz operators $T_{B^*+Bz}$ and $T_z$ respectively defined on
 $H^2(\mathcal D_V)$, $B$ being the fundamental operator of
 $(T,V)$.
 \end{enumerate}
 Therefore, $T|_{\mathcal K_2}$ and $V|_{\mathcal
 K_2}$ are respectively unitarily equivalent to $T_{B+B^*\bar z}$ and $T_{\bar
 z}$ defined on $H^2(\mathcal D_V)$. Also since $V^*$ is the minimal isometric dilation of $P^*$ by
 Proposition \ref{easyprop1}, $\mathcal D_V$ and $\mathcal D_P$
 have same dimension.
\end{proof}

\noindent \textbf{Acknowledgement.} The author would like to thank
Orr Shalit for his invaluable comments on this article. Moreover,
the author is grateful to Orr Shalit for providing warm and
generous hospitality at Ben-Gurion University, Be'er Sheva,
Israel.

\end{document}